\theoremstyle{plain}
\newtheorem{theorem}{Theorem}[section]
\newtheorem{lemma}{Lemma}[section]
\newtheorem{proposition}{Proposition}[section]
\newtheorem{corollary}{Corollary}[section]
\theoremstyle{definition}
\newtheorem{definition}{Definition}[section]
\theoremstyle{remark}
\newtheorem{remark}{Remark}[section]
\numberwithin{equation}{section}
\newcommand{\C}{\mathbb{C}}
\newcommand{\R}{\mathbb{R}}
\newcommand{\Z}{\mathbb{Z}}
\newcommand{\N}{\mathbb{N}}
\newcommand{\ke}{\text{ker }}
\newcommand{\mc}{\mathcal}
\newcommand{\be}{\begin{equation}}
\newcommand{\ee}{\end{equation}}
\title
\author{Thibault Lefeuvre}
\address{Laboratoire de Mathématiques d’Orsay, Univ. Paris-Sud, CNRS, Université Paris-Saclay, 91405 Orsay, France}
\email{thibault.lefeuvre@u-psud.fr}
\begin{document}
%% Résumé

%% Résumé anglais
\begin{abstract}
For smooth compact connected manifolds with strictly convex boundary, no conjugate points and a hyperbolic trapped set, we prove an equivalence principle concerning the injectivity of the X-ray transform $I_m$ on symmetric solenoidal tensors and the surjectivity of an operator ${\pi_m}_*$ on the set of solenoidal tensors. This allows us to establish the injectivity of the X-ray transform on solenoidal tensors of any order in the case of a surface satisfying these assumptions. 
\end{abstract}

\maketitle

Throughout this paper, we shall work in the smooth category, that is all the manifolds and coordinate charts are considered to be smooth.

\section{Introduction}

Following the work initiated by Guillarmou \cite{Guillarmou-17-2}, the present paper studies the X-ray transform on a smooth compact connected Riemannian manifold with strictly convex boundary, no conjugate points and a non-empty trapped set $K$ which is hyperbolic (see \S\ref{ssect:hyp} for a definition). In the spirit of Paternain-Zhou \cite{Paternain-Zhou-16}, we prove an equivalence principle between the injectivity of the X-ray transform on smooth symmetric solenoidal $m$-tensors and the existence of invariant functions by the geodesic flow, with prescribed pushforward on the set of solenoidal symmetric $m$-tensors (Theorem \ref{th2}). Using this principle, we obtain the injectivity of the X-ray transform over solenoidal symmetric tensors of any order in the case of a surface satisfying these assumptions, which is the main result of this paper (Theorem \ref{th1}). So far, this had been an open statement for $m \geq 2$ (the two cases $m=0$ and $m=1$ being adressed by Guillarmou \cite{Guillarmou-17-2}).

Let us briefly recall some of the results known up to this date:
\begin{itemize}
\item In the case of a closed surface with negative curvature, the first proof of the s-injectivity of the X-ray transform for symmetric tensors of any order goes back to the celebrated paper of Guillemin-Kazhdan \cite{Guillemin-Kazhdan-80} and was then extended to any dimension (under the assumption that the sectional curvature is non-positive) by Croke-Sharafutdinov in \cite{Croke-Sharafutdinov-98}.
\item In the case of a closed surface with hyperbolic geodesic flow (Anosov surfaces in the literature), the s-injectivity of the X-ray transform up to second-order tensors was established by Paternain-Salo-Uhlmann \cite{Paternain-Salo-Uhlmann-14-2} and generalized to any order by Guillarmou \cite{Guillarmou-17-1}.
\item In the case of a simple surface (thus without trapped set, $K = \emptyset$), the s-injectivity was proved by Paternain-Salo-Uhlmann \cite{Paternain-Salo-Uhlmann-13} for symmetric tensors of any order.
\end{itemize}

The interest of the X-ray transform is manifold and this notion has been extensively studied in the literature, but most of the articles assume a non-trapping condition. In particular, this operator naturally arises as the differential of the marked boundary distance function when studying problems of boundary rigidity (see \cite{Lefeuvre-18,Guillarmou-Mazzucchelli-18}). We refer to the surveys of Paternain-Salo-Uhlmann \cite{Paternain-Salo-Uhlmann-14-1} and Ilmavirta-Monard \cite{Ilmavirta-Monard-18} for an overview of the subject. Among the main references in the field are the works of Mukhometov \cite{Mukhometov-81}, Michel \cite{Michel-81}, Otal \cite{Otal-90}, Sharafutdinov \cite{Sharafutdinov-94}, Croke \cite{Croke-91}, Pestov-Uhlmann \cite{Pestov-Uhlmann-05}, Stefanov-Uhlmann \cite{Stefanov-Uhlmann-05}, Burago-Ivanov \cite{Burago-Ivanov-10} and Croke-Herreros \cite{Croke-Herreros-16}.

Some of the results exposed in this article are reinvested in the following papers \cite{Lefeuvre-18} and \cite{Guillarmou-Lefeuvre-18} (with Guillarmou) in order to prove results of rigidity under rather similar assumptions. In particular, using Theorem \ref{th1} below, we prove in \cite{Lefeuvre-18} that surfaces with strictly convex boundary, no conjugate points and a hyperbolic trapped set are \textit{locally marked boundary distance rigid} i.e. that the marked boundary distance function locally determines the metric, thus giving an alternative proof to a recent result of Guillarmou-Mazzucchelli \cite{Guillarmou-Mazzucchelli-18}. Eventually, we stress the fact that this work strongly relies on the technical tools introduced in both papers of Guillarmou \cite{Guillarmou-17-1} and \cite{Guillarmou-17-2}, themselves based on recent analytic techniques developed in the framework of hyperbolic dynamical systems (see Dyatlov-Guillarmou \cite{Dyatlov-Guillarmou-16}, Dyatlov-Zworski \cite{Dyatlov-Zworski-16} and Faure-Sjöstrand \cite{Faure-Sjostrand-11}).

\subsection{Preliminaries}

\label{ss:intro}

Let us consider $(M,g)$, a compact connected Riemannian manifold with strictly convex boundary and no conjugate points. We denote by $SM$ its unit tangent bundle, that is
\[ SM = \left\{ (x,v) \in TM, |v|_x = 1 \right\}, \]
and by $\pi_0 : SM \rightarrow M$, the canonical projection. The Liouville measure on $SM$ will be denoted by $d\mu$. The incoming (-) and outcoming (+) boundaries of the unit tangent bundle of $M$ are defined by
\[ \partial_\pm SM = \left\{ (x,v) \in TM, x \in \partial M, |v|_{x} = 1, \mp g_x(v,\nu) \leq 0 \right\}, \]
where $\nu$ is the outward pointing unit normal vector field to $\partial M$. Note in particular that $S(\partial M) = \partial_+ SM \cap \partial_- SM$, which we will denote by $\partial_0 SM$ in the following. If $i : \partial SM \rightarrow SM$ is the embedding of $\partial SM$ into $SM$, we define the measure $d\mu_\nu$ on the boundary $\partial SM$ by
\be d\mu_\nu(x,v) := |g_x(v,\nu) i^* d\mu (x,v)| \ee
$\varphi_t$ denotes the (incomplete) geodesic flow on $SM$ and $X$ the vector field induced on $T(SM)$ by $\varphi_t$. Given each point $(x,v) \in SM$, we define the escape time in positive (+) and negative (-) times by:
\be \begin{array}{c} l_+(x,v) := \sup \left\{ t \geq 0, \varphi_t (x,v) \in SM \right\} \in [0, + \infty] \\
l_-(x,v) := \inf \left\{ t \leq 0, \varphi_t (x,v) \in SM \right\} \in [-\infty, 0] 
\end{array} \ee
We say that a point $(x,v)$ is \textit{trapped in the future} (resp. \textit{in the past}) if $l_+(x,v) = + \infty$ (resp. $l_-(x,v) = -\infty$). The incoming (-) and outcoming (+) tails in $SM$ are defined by:
\[ \Gamma_\mp := \left\{ (x,v) \in SM, l_\pm(x,v) = \pm \infty \right\} \]
They consist of the sets of points which are respectively trapped in the future or the past. The trapped set $K$ for the geodesic flow on $SM$ is defined by:
\be K := \Gamma_+ \cap \Gamma_- = \cap_{t \in \R} \varphi_t(SM) \ee
It consists of the set of points which are both trapped in the future and the past. These sets are closed in $SM$ and invariant by the geodesic flow. A manifold is said to be \textit{non-trapping} if $K = \emptyset$. The aim of the present article is precisely to bring new results in the case $K \neq \emptyset$, which we will assume to hold from now on.

It is convenient to embed the manifold $M$ into a strictly larger manifold $M_e$, such that $M_e$ satisfies the same properties : it is smooth, has strictly convex boundary and no conjugate points (see \cite[Section 2.1 and Section 2.3]{Guillarmou-17-2}). This can be done so that the longest connected geodesic ray in $SM_e \setminus SM^\circ$ has its length bounded by some constant $L < + \infty$. Moreover, for some technical reasons which will appear later, the extended metric is chosen without non-trivial Killing tensor fields (see the following paragraph for a definition), which is a generic condition (see \cite[Proposition 3.2]{Paternain-Zhou-16}). The trapped set of $M_e$ is the same as the trapped set of $M$ and the sets $\Gamma_\pm$ are naturally extended to $SM_e$. In the following, for $t \in \R$, $\varphi_t$ will actually denote the extension of $\varphi_t|_{SM}$ to $SM_e$.

\subsection{The X-ray transform}

\label{ssect:xray}

We can now define the X-ray transform:

\begin{definition}
The X-ray transform is the map $I : \mathcal{C}_c^\infty(SM \setminus \Gamma_-) \rightarrow \mathcal{C}_c^\infty(\partial_-SM \setminus \Gamma_-)$ defined by:
\[ If(x,v) := \int_0^{+ \infty} f(\varphi_t(x,v)) dt \]
\end{definition}
Note that since $f$ has compact support in the open set $SM \setminus \Gamma_-$, we know that the exit time of any $(x,v) \in SM \setminus \Gamma_-$ is uniformly bounded, so the integral is actually computed over a compact set. We introduce the \textit{non-escaping mass function}:

\begin{definition}
Let $\mathcal{T}_+(t) = \left\{ z \in SM, \varphi_s(z) \in SM, \forall s \in [0,t] \right\}$. We define the non-escaping mass function $V$ by:
\be \forall t \geq 0, \qquad V(t) = \mu(\mathcal{T}_+(t)) \ee
\end{definition}

It is interesting to extend the X-ray transform to larger sets of function like $L^p(SM)$ spaces for some $p \geq 1$. This will be done more precisely in \S\ref{ssect:fun} but let us mention, as for the introduction, the
\begin{proposition}
\label{prop:im}
\begin{enumerate}
\item If $\mu(K) = 0$ (and no other assumptions are made on $K$), then $I : L^1(SM) \rightarrow L^1(\partial_- SM, d\mu_\nu)$ is bounded.
\item If there exists a $p \in (2, + \infty]$, such that
\be \int_1^{+ \infty} t^{\frac{p}{p-2}} V(t) dt < \infty, \ee
then $I : L^p(SM) \rightarrow L^2(\partial_- SM, d\mu_\nu)$ is bounded.
\end{enumerate}
\end{proposition}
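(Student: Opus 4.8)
Both assertions rest on \textbf{Santaló's formula}: for every nonnegative measurable $G$ on $SM$,
\be \int_{SM} G \, d\mu = \int_{\partial_- SM}\int_0^{l_+(x,v)} G(\varphi_t(x,v))\, dt\, d\mu_\nu(x,v), \ee
which is just the change of variables induced by $(x,v,t)\mapsto\varphi_t(x,v)$ and is valid as soon as the tails $\Gamma_\pm$ are $\mu$-null, since that map then parametrizes $SM$ modulo a null set. The first step in either case is therefore to check that the working hypothesis forces $\mu(\Gamma_\pm)=0$. For (1) this follows from $\mu(K)=0$ together with the invariance of $\mu$ under $\varphi_t$: the ``wandering'' part $\Gamma_-\setminus K$ is shifted by $\varphi_1$ with strictly decreasing exit time, so its countably many disjoint unit-time slices all carry equal mass inside the finite space $SM$ and hence carry none, giving $\mu(\Gamma_\pm)=\mu(K)=0$. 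For (2) the summability hypothesis forces $V(t)\to 0$, and since $V(t)\downarrow\mu(\Gamma_-)$ as $t\to+\infty$ we again obtain $\mu(\Gamma_\pm)=0$.

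For (1) the plan is to apply the triangle inequality and then Santaló's formula to $G=|f|$:
\be \|If\|_{L^1(\partial_-SM,d\mu_\nu)}\le\int_{\partial_-SM}\int_0^{l_+}|f(\varphi_t(x,v))|\,dt\,d\mu_\nu=\int_{SM}|f|\,d\mu=\|f\|_{L^1(SM)}, \ee
giving boundedness with operator norm at most $1$; the same identity shows $If$ is finite $\mu_\nu$-a.e., so it is genuinely defined on $L^1(SM)$.

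For (2) I would combine Cauchy--Schwarz in the time variable with Santaló's formula weighted by the transit time. Writing $\tau(z):=l_+(z)-l_-(z)$ for the length of the geodesic through $z$, strict convexity makes the sojourn set an interval, so $\tau$ is constant along orbits with $\tau(\varphi_t(x,v))=l_+(x,v)$ for $(x,v)\in\partial_-SM$. Cauchy--Schwarz gives $|If(x,v)|^2\le l_+(x,v)\int_0^{l_+}|f\circ\varphi_t|^2\,dt$, and Santaló's formula applied to $G=\tau|f|^2$ yields
\be \|If\|_{L^2(\partial_-SM,d\mu_\nu)}^2\le\int_{SM}\tau(z)\,|f(z)|^2\,d\mu(z). \ee
Hölder with conjugate exponents $p/2$ and $p/(p-2)$ then bounds the right-hand side by $\|f\|_{L^p(SM)}^2\,\big(\int_{SM}\tau^{p/(p-2)}\,d\mu\big)^{(p-2)/p}$, reducing everything to the finiteness of the transit-time moment $\int_{SM}\tau^{p/(p-2)}\,d\mu$.

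This last point is where $V$ enters and is the main step. By the time reversal $(x,v)\mapsto(x,-v)$, which preserves $\mu$ and swaps $\tau_\pm:=\pm l_\pm$, one has $\int\tau_-^r\,d\mu=\int\tau_+^r\,d\mu$, so with $r=p/(p-2)\ge 1$ the bound $\tau^r\le 2^r\tau_+^r$ reduces matters to $\int_{SM}\tau_+^r\,d\mu$. Since $\{\tau_+\ge t\}=\mathcal{T}_+(t)$ by definition of the non-escaping mass function, the layer-cake formula gives $\int_{SM}\tau_+^r\,d\mu=r\int_0^{+\infty}t^{r-1}V(t)\,dt$; because $V$ is bounded and $t^{r-1}\le t^{r}$ for $t\ge 1$, the stated hypothesis $\int_1^{+\infty}t^{p/(p-2)}V(t)\,dt<\infty$ makes this finite, and assembling the estimates yields $\|If\|_{L^2}\le C\|f\|_{L^p}$. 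I expect the genuine obstacle to lie not in these functional inequalities but in the dynamical input that underpins them: the reduction $\mu(K)=0\Rightarrow\mu(\Gamma_\pm)=0$ and the consequent legitimacy of the (weighted) Santaló change of variables for merely $L^p$ data, including the $\mu_\nu$-a.e. finiteness of $l_+$ on $\partial_-SM$.
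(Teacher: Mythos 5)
Your proof is correct and follows essentially the same route as the paper: item (1) is the standard consequence of Santaló's formula (Lemma \ref{lemm:sant}), and item (2) fills in, with the same Cauchy--Schwarz/Hölder plus Cavalieri (layer-cake) scheme, the argument the paper delegates to \cite[Lemma 5.1]{Guillarmou-17-2}. Your preliminary reduction $\mu(K)=0\Rightarrow\mu(\Gamma_\pm)=0$ via measure-preserving unit-time slicing of $\Gamma_\pm\setminus K$ is exactly the standard justification implicit in the paper's appeal to Santaló's formula.
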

Note that both conditions are satisfied if $K$ is hyperbolic (this stems from Proposition \ref{prop:hyp}). The proof of the first item is very standard and relies on Santaló's formula \cite{Santalo-52}:

\begin{lemma}
\label{lemm:sant}
If $\mu(K) = 0$ and $f \in L^1(SM)$, then:
\[ \int_{SM} f d\mu = \int_{\partial_- SM} \int_0^{l_+(x,v)} f(\varphi_t(x,v)) dt d\mu_\nu(x,v) \]
\end{lemma}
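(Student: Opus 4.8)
The plan is to use the geodesic flow to set up a change of variables between $SM$ and its incoming boundary, so that Santaló's identity reduces to a Jacobian computation for the Liouville measure. The first point to settle is that both sides only ever see $SM$ outside the past-trapped set, since the flow-out parametrization below covers exactly those points whose backward orbit exits $SM$ in finite time. So I would begin by proving $\mu(\Gamma_+) = \mu(\Gamma_-) = \mu(K) = 0$. Using the invariance of $\mu$ under $\varphi_t$ together with the reversibility of the geodesic flow, set $\mathcal{T}(t) = \{ z \in SM : \varphi_s(z) \in SM,\ \forall s \in [-t,t] \}$. Then $\varphi_t(\mathcal{T}(t))$ is the set of points remaining in $SM$ over $[-2t,0]$, whose measure is $V(2t)$ by reversibility, while invariance gives $\mu(\mathcal{T}(t)) = \mu(\varphi_t(\mathcal{T}(t)))$; hence $\mu(\mathcal{T}(t)) = V(2t)$. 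Letting $t \to \infty$, the sets $\mathcal{T}(t)$ decrease to $K$, so $\mu(K) = \lim_t V(2t)$. Since $\Gamma_- = \cap_{t \geq 0} \mathcal{T}_+(t)$ we also have $\mu(\Gamma_-) = \lim_t V(t)$, and $\mu(\Gamma_+) = \mu(\Gamma_-)$ by reversibility; all three quantities coincide with $\mu(K) = 0$. In particular $\int_{SM} f\, d\mu = \int_{SM \setminus \Gamma_+} f\, d\mu$.

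Next I would introduce the flow-out map $\Psi(x,v,t) := \varphi_t(x,v)$ on the domain $\mathcal{D} = \{ (x,v,t) : (x,v) \in \partial_- SM \setminus \Gamma_-,\ 0 \leq t < l_+(x,v) \}$. Every $z \in SM \setminus \Gamma_+$ satisfies $l_-(z) > -\infty$, so flowing backward it meets $\partial SM$ at a first finite time, at a point where $v$ points inward, i.e. on $\partial_- SM$; this writes $z = \varphi_t(x,v)$ for a unique $(x,v) \in \partial_- SM$ and a unique $0 \leq t < l_+(x,v)$. Thus $\Psi$ is a bijection of $\mathcal{D}$ onto $SM \setminus \Gamma_+$, modulo the null set of glancing directions $\partial_0 SM$ and points where $l_+$ is discontinuous, which are negligible by strict convexity and the absence of conjugate points. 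Away from these exceptional sets $\Psi$ is a diffeomorphism, by smoothness of the flow and transversality of $X$ to $\partial SM$.

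The crux is the Jacobian identity $\Psi^* (d\mu) = dt\, d\mu_\nu$. Realizing $d\mu$ as the volume form associated to the contact form, the flow preserves it, so $\mathcal{L}_X d\mu = 0$ and $\Psi^*(d\mu)$ is invariant under translation in the $t$-variable; it must therefore be of the form $dt \wedge \kappa$ for a $t$-independent form $\kappa$ on $\partial_- SM$. Evaluating at $t = 0$ identifies $\kappa$ with $i^*(\iota_X d\mu)$, and a short computation in boundary-adapted coordinates gives $i^*(\iota_X d\mu) = |g_x(v,\nu)|\, i^* d\mu = d\mu_\nu$, the factor $|g_x(v,\nu)|$ being precisely the normal component of the flow generator $X$ at the boundary. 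Combining the reduction, the change of variables and Fubini yields
\[ \int_{SM} f\, d\mu = \int_{SM \setminus \Gamma_+} f\, d\mu = \int_{\partial_- SM} \int_0^{l_+(x,v)} f(\varphi_t(x,v))\, dt\, d\mu_\nu(x,v), \]
which is the claimed formula.

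I expect the main obstacle to be the measure-theoretic rigor rather than the formal Jacobian computation: one must justify that $\Psi$ is a measure-preserving bijection onto a full-measure subset of $SM$, carefully discarding the glancing set $\partial_0 SM$ and the locus where $l_+$ is not continuous, and one must handle a general $f \in L^1(SM)$ by first establishing the identity for $f \in \mathcal{C}_c(SM \setminus (\Gamma_+ \cup \Gamma_-))$ and then passing to the limit via monotone and dominated convergence, using $\mu(\Gamma_\pm) = 0$. Strict convexity of $\partial M$ and the no-conjugate-points assumption are exactly what guarantee that these exceptional sets are negligible and that the flow-out coordinates are well-defined.
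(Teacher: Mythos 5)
Your proof is correct. There is nothing internal to compare it against: the paper states this lemma as Santaló's classical formula and delegates the proof to \cite{Santalo-52}, and what you give is precisely the standard argument behind that citation — the flow-out map $\Psi(x,v,t)=\varphi_t(x,v)$ from $\{(x,v,t): (x,v)\in\partial_-SM\setminus\Gamma_-,\ 0\le t<l_+(x,v)\}$ onto $SM$ minus a null set, together with the Jacobian identity $\Psi^*(d\mu)=dt\,d\mu_\nu$ obtained from $\mathcal{L}_X d\mu=0$ and the boundary evaluation $i^*(\iota_X d\mu)=|g_x(v,\nu)|\,i^*d\mu$. The genuinely valuable step you add is the reduction $\mu(\Gamma_-)=\lim_{t\to\infty}V(t)=\mu(K)$ via $\mu(\mathcal{T}(t))=V(2t)$ and monotonicity of $V$: this is exactly what the hypothesis requires, since the lemma assumes only $\mu(K)=0$ while the paper's Proposition \ref{prop:hyp} yields $\mu(\Gamma_\pm)=0$ only under hyperbolicity of $K$, so you were right not to invoke it. Two small inaccuracies, neither of which damages the proof: the absence of conjugate points plays no role here (strict convexity and $\mu(K)=0$ suffice; conjugate points concern the normal operator $\Pi$, not Santaló's formula), and the glancing set $\partial_0 SM$ need not be separately discarded on the boundary side, since the density $|g_x(v,\nu)|$ in $d\mu_\nu$ vanishes there identically — what strict convexity actually buys you is that no interior point flows out of a glancing direction (a geodesic tangent to a strictly convex boundary immediately leaves $M$) and that $X$ is transverse to $\partial SM$ away from $\partial_0 SM$, so $\Psi$ is a diffeomorphism where it matters. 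The final approximation step for general $f\in L^1(SM)$ is fine as written; alternatively, once $\Psi$ is a measure-preserving bijection modulo null sets, the identity for all $f\in L^1$ follows at once from the abstract change-of-variables theorem without any density argument.
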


The second item in Proposition \ref{prop:im} is established in \cite[Lemma 5.1]{Guillarmou-17-2}, using Cavalieri's principle. From this, we can define a formal adjoint $I^* : \mathcal{C}_c^\infty(\partial_-SM^\circ \setminus \Gamma_-) \rightarrow \mathcal{C}^\infty(SM \setminus \Gamma_-)$ to the X-ray transform by the formula
\be I^*u(x,v) = u(\varphi_{l_-(x,v)} (x,v)),  \ee
for the $L^2$ inner scalar products induced by the Liouville measure $d\mu$ on $SM$ and by the measure $d\mu_\nu$ on $\partial_-(SM)$, that is $\langle If, u \rangle_{L^2(\partial_-SM, d\mu_\nu)} = \langle f , I^*u \rangle_{L^2(SM,d\mu)}$, for $f \in \mathcal{C}_c^\infty(SM \setminus \Gamma_-), u \in \mathcal{C}_c^\infty(\partial_-SM \setminus \Gamma_-)$. By the previous Proposition, it naturally extends to a bounded operator $I^* : L^2(\partial_- SM, d\mu_\nu) \rightarrow L^{p'}(SM)$, where $p'$ is the conjugate exponent to $p$ (it satisfies the equality $1/p + 1/p' = 1$).

From this definition of the X-ray transform on functions on $SM$, we can derive the definition of the X-ray transform for symmetric $m$-tensors. Indeed, such tensors can be seen as functions on $SM$ via the identification map:
\[ \pi_m^* : \left| \begin{array}{l} \mathcal{C}^\infty(M, \otimes_S^m T^*M) \rightarrow \mathcal{C}^\infty(SM) \\ f \mapsto (\pi_m^* f) (x,v) = f(x)(\otimes^m v) \end{array} \right. \]
The $L^p$-space, for $p \geq 1$, (resp. Sobolev space for $s \geq 0$) of symmetric $m$-tensors thus consists of tensors whose coordinate functions are all in $L^p(M)$ (resp. $H^s(M)$). An equivalent way to define $H^s(M, \otimes^m_S T^*M)$ (which will be used in Section \ref{ssect:surj}) is to consider tensors $u$ such that $(1+\Delta)^{s/2} u \in L^2(M, \otimes^m_S T^*M)$, where $\Delta = D^*D$ is the Dirichlet Laplacian on $M$ (see below for a definition of $D$ and $D^*$) It is easy to check that $\pi_m^* : L^p(M, \otimes^m_S T^*M) \rightarrow L^p(SM)$ is bounded (resp. $\pi_m^* : H^s(M, \otimes^m_S T^*M) \rightarrow H^s(SM)$).

It also provides a dual operator acting on distributions
\[ {\pi_m}_* : \mathcal{C}^{-\infty}(SM) \rightarrow \mathcal{C}^{-\infty}(M, \otimes_S^m T^*M), \]
such that for $u \in \mathcal{C}^{-\infty}(SM), f \in \mathcal{C}^\infty(M, \otimes_S^m T^*M), \langle {\pi_m}_* u , f \rangle = \langle u , \pi_m^* f \rangle$, where the distribution pairing is given by the natural scalar product on the bundle $\otimes_S^m T^*M$ induced by the metric $g$, which is written in coordinates, for $f$ and $h$ smooth tensors:
\be \label{eq:ps} \langle f , h \rangle_g = \int_M f_{i_1 ... i_m} g^{i_1 j_1} ... g^{i_m j_m} h_{j_1 ... j_m} d\text{vol} \ee

\begin{definition}
Let $p > 2$ and $p'$ denote its dual exponent such that $1/p + 1/p'=1$. The X-ray transform for symmectric $m$-tensors is defined by 
\be I_m := I \circ \pi_m^* : L^p(M, \otimes^m_S T^*M) \rightarrow L^2(\partial_-SM, d\mu_\nu) \ee
It is a bounded operator, as well as its adjoint
\be I_m^* = {\pi_m}_* \circ I^* : L^2(\partial_-SM, d\mu_\nu) \rightarrow  L^{p'}(M, \otimes^m_S T^*M) \ee
\end{definition}

Let us now explain the notion of \textit{solenoidal injectivity} of the X-ray transform. If $\nabla$ denotes the Levi-Civita connection and $\sigma : \otimes^{m+1} T^*M \rightarrow \otimes^{m+1}_S T^*M$ is the symmetrization operation, we define the \textit{inner derivative} $D := \sigma \circ \nabla : \mathcal{C}^\infty(M, \otimes_S^m T^*M) \rightarrow \mathcal{C}^\infty(M, \otimes_S^{m+1} T^*M)$. The divergence of symmetric $m$-tensors is its formal adjoint differential operator, given by $D^*f := -\text{tr}_{12}(\nabla f)$, where $\text{tr}_{12} : \mathcal{C}^\infty(M, \otimes_S^m T^*M) \rightarrow \mathcal{C}^\infty(M, \otimes_S^{m-2} T^*M)$ denotes the trace map defined by contracting with the Riemannian metric, namely
\[ \text{tr}_{12}(q)(v_1, ...,v_{m-2}) = \sum_{i=1}^n q(e_i,e_i,v_1,...,v_{m-2}), \]
if $(e_1,...e_n)$ is a local orthonormal basis of $TM$. A \textit{Killing tensor field} $v \in \mathcal{C}^\infty(M, \otimes_S^{m} T^*M)$ is such that $Dv = 0$. The \textit{trivial} Killing tensor fields are the ones obtained for $m$ even by $c \cdot \sigma (\otimes^{m/2} g)$ for some constant $c$.

If $f \in H^s(M, \otimes^m_S T^*M)$ for some $s \geq 0$, there exists a unique decomposition of the tensor $f$ such that
\be \label{eq:decomp} f = f^s + Dp, \qquad D^*f^s = 0, p|_{\partial M} = 0, \ee
where $f^s \in H^s(M, \otimes_S^m T^*M), p \in H^{s+1}(M, \otimes_S^{m-1} T^*M)$ (see \cite[Theorem 3.3.2]{Sharafutdinov-94} for a proof of this result). $f^s$ is called the \textit{solenoidal part} of the tensor whereas $Dp$ is called the \textit{potential part}. Moreover, this decomposition holds in the smooth class and extends to any distribution $f \in H^{-s}(M, \otimes^m_S T^*M)$, $s \geq 0$, as long as it has compact support within $M^\circ$ (see the arguments given in the proof of Lemma \ref{lemm:surj1} for instance). We will say that $I_m$ is injective over solenoidal tensors, or in short $s$-\textit{injective}, if it is injective when restricted to
\[ \mathcal{C}^\infty_{\text{sol}} := \mathcal{C}^\infty(M, \otimes_S^m T^*M) \cap \ker D^* \]

This definition stems from the fact that given $p \in \mathcal{C}^\infty(M, \otimes_S^{m-1} T^*M)$ such that $p|_{\partial M} = 0$, one always has $I_m(Dp) = 0$. This follows from $X \pi_m^* = \pi_{m+1}^* D$ (by computing in local coordinates for instance) and the conclusion is then immediate, using the fundamental theorem of calculus together with $p|_{\partial M} = 0$. Thus it is morally impossible to recover the potential part of a tensor $f$ in the kernel of $I_m$.

\begin{remark}
All these definitions also apply to $M_e$, the extension of $M$. In the following, an index $e$ on an application will mean that it is considered on the manifold $M_e$. The lower indices $\textit{inv}, \textit{comp}, \textit{sol}$ attached to a set of functions or distributions will respectively mean that we consider \textit{invariant} functions (or distributions) with respect to the geodesic flow, \textit{compactly supported} functions (or distributions) within a precribed open set, \textit{solenoidal} tensors (or tensorial distributions).
\end{remark}

\subsection{Main results}

We now consider manifolds for which the trapped set $K$ is hyperbolic (see \S\ref{ssect:hyp} for a definition). In particular, this means that the two items of Proposition \ref{prop:im} are satisfied, and the X-ray transform at least makes sense as an application $I_m : L^p(M, \otimes^m_S T^*M) \rightarrow L^2(\partial_-SM, d\mu_\nu)$, for any $p > 2$. Our main result is the s-injectivity of the X-ray transform for symmetric $m$-tensors in dimension $2$:

\begin{theorem}
\label{th1}
Let $(M,g)$ be a compact connected surface with strictly convex boundary, no conjugate points and a hyperbolic trapped set. Then $I_m$ is $s$-injective for any $m \geq 0$.
\end{theorem}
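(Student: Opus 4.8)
The plan is to establish Theorem~\ref{th1} as a consequence of the equivalence principle (Theorem~\ref{th2}) together with techniques specific to surfaces, following the strategy of \cite{Guillarmou-17-1} for the closed Anosov case and \cite{Paternain-Salo-Uhlmann-13} for the simple case. By Theorem~\ref{th2}, the $s$-injectivity of $I_m$ is equivalent to a surjectivity statement for the operator ${\pi_m}_*$ restricted to an appropriate space of solenoidal tensors: concretely, one wants to show that for every solenoidal tensor $f$ one can produce a flow-invariant distribution $u$ (of suitable regularity) on $SM$ whose pushforward ${\pi_m}_* u$ equals $f$. The starting point is therefore to reduce $s$-injectivity to constructing such invariant distributions with prescribed fiberwise-degree-$m$ pushforward.

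The surface-specific structure I would exploit is the Fourier decomposition in the fibers of $SM$. On a surface, $SM$ is a circle bundle, so any function or distribution decomposes into Fourier modes $u = \sum_{k \in \Z} u_k$, where each $u_k$ lies in the $k$-th eigenspace of the vertical vector field $V$ (rotation in the fibers), and the geodesic vector field splits as $X = \eta_+ + \eta_-$ with the raising/lowering operators $\eta_\pm$ shifting Fourier degree by $\pm 1$. The condition $Xu = 0$ for a flow-invariant $u$ becomes a coupled system $\eta_+ u_{k-1} + \eta_- u_{k+1} = 0$ relating consecutive modes. The key analytic input, carried over from Guillarmou's work \cite{Guillarmou-17-1,Guillarmou-17-2}, is the existence and the microlocal/regularity properties of the resolvent of $X$ (the operators $R_\pm$) on manifolds with hyperbolic trapped set, obtained via the Faure--Sjöstrand anisotropic Sobolev space machinery \cite{Faure-Sjostrand-11,Dyatlov-Guillarmou-16}; this is what replaces the ellipticity arguments available in the non-trapping setting and lets one solve transport equations $Xu = f$ on the non-compact-in-time flow while controlling the regularity of $u$ near the trapped set.

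The heart of the proof is then a Pestov-type energy identity adapted to the trapped setting, combined with positivity of curvature-type terms specific to dimension two. Given $f$ solenoidal with $I_m f = 0$, one constructs via the resolvent an invariant distribution $u$ with ${\pi_m}_* u = f$ and whose Fourier modes $u_k$ vanish for $|k|$ large or satisfy a finite-band structure; the commutator identity for $[\eta_+, \eta_-]$ involves the Gaussian curvature, and integrating the Pestov identity forces the higher Fourier modes to vanish one by one, ultimately showing $f$ is potential, hence $f^s = 0$. Here the absence of conjugate points and the hyperbolicity of $K$ enter to guarantee that the boundary and trapped-set contributions in the energy identity have a favorable sign or vanish, so that no uncontrolled terms survive. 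The main obstacle I anticipate is precisely this last point: closing the energy estimate requires that the invariant distribution $u$ produced by the resolvent have enough regularity (or decay) near the trapped set $K$ for the integration-by-parts identities to be valid and for the boundary terms at $\partial SM$ to be controlled; reconciling the low regularity of $u$ near $K$ (where the resolvent only yields anisotropic Sobolev regularity) with the need for honest $L^2$-type Pestov identities is the technical crux, and is where the careful wavefront-set analysis of \cite{Guillarmou-17-2} must be invoked.
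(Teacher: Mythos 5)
Your opening reduction is the paper's: by Theorem~\ref{th2}, $s$-injectivity of $I_m$ follows once one constructs, for \emph{every} $f \in \mathcal{C}^\infty_{\text{sol}}(M,\otimes^m_S T^*M)$, a flow-invariant $w$ with ${\pi_m}_* w = f$. But the engine you then propose --- a Pestov-type energy identity in which the curvature term coming from $[\eta_+,\eta_-]$ has a favorable sign and kills the Fourier modes one by one --- is a genuine wrong turn, for two reasons. First, it is precisely the mechanism the theorem is designed to dispense with: the result was already known for all $m$ under non-positive sectional curvature \cite{Guillarmou-17-2}, and with no sign hypothesis on the Gaussian curvature (plus trapping and boundary terms) the Pestov estimate does not close; even in the closed Anosov case, energy methods only reached $m=2$ \cite{Paternain-Salo-Uhlmann-14-2}, and the extension to all $m$ required a different idea. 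Second, you run the equivalence backwards: you construct $u$ with ${\pi_m}_* u = f$ only for $f \in \ker I_m$ and then try to conclude via an energy identity, whereas in Theorem~\ref{th2} the injectivity is immediate once surjectivity of ${\pi_m}_*$ is known --- one writes $\pi_m^* f = Xh$ with $h|_{\partial SM} = 0$ by the smooth Livsic theorem and computes $0 = \langle Xw, h\rangle = -\langle {\pi_m}_* w, f\rangle = -\|f\|^2$; no integration-by-parts identity near $K$ is ever needed, so the ``technical crux'' you identify (validity of Pestov identities for low-regularity invariant distributions near the trapped set) is not the actual obstruction.

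What actually proves the surjectivity (Theorem~\ref{th4}) is multiplicative, not variational, and this is the idea missing from your proposal. By \cite[Lemma 7.2]{Paternain-Zhou-16} it suffices to treat $a_m \in \Omega_m$ with $\eta_- a_m = 0$; since the canonical line bundle $\kappa$ of a compact surface with boundary is holomorphically trivial \cite{Forster-81}, such an $a_m$ factors as $a_m = f(1)\cdots f(m)$ with $f(i) \in \Omega_1$ and $\eta_- f(i) = 0$. The known $s$-injectivity of $I_1$ (any dimension, no curvature hypothesis, \cite{Guillarmou-17-2}) combined with Theorem~\ref{th2} produces odd invariant $w(i) \in \cap_{p<\infty} L^p(SM_e)$ with ${\pi_1}_* w(i) = {\pi_1}_* f(i)$ (Lemma~\ref{coro:surj1}). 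The fiberwise Hilbert transform is bounded on $L^p$ for $1 < p < \infty$, so the Szegö projector $S$ preserves $\cap_{p<\infty} L^p$ (Lemma~\ref{prop:sbounded}); the commutation relation $XSu = SXu - \eta_+ u_0 + \eta_- u_1$ of Lemma~\ref{lemm:co} shows $XS(w(i)) = 0$ because $w(i)$ is odd and $w(i)_1 = f(i)_1$ is $\eta_-$-holomorphic; and since $\cap_{p<\infty} L^p$ is an algebra, $\omega := S(w(1))\cdots S(w(m))$ is an honest invariant function with $\omega_m = a_m$, $\omega_l = 0$ for $l < m$, hence ${\pi_m}_* \omega = {\pi_m}_* a_m$. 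So the theorem for all $m$ is bootstrapped from the $m=1$ case by this product construction --- the $L^p$ (rather than merely anisotropic Sobolev) regularity in Theorem~\ref{th2} exists exactly so that these products make sense --- and no curvature positivity or Pestov identity appears anywhere.
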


As mentioned previously, this result was proved in any dimension by Guillarmou \cite{Guillarmou-17-2} for $m=0,1$, and $m>1$ under the additional assumption that the sectional curvatures of the metric are non-positive. We are here able to relax the hypothesis on the curvature. As stated in the introduction, we also obtain the following equivalence principle in the spirit of \cite{Paternain-Zhou-16}:

\begin{theorem}
\label{th2}
Let $(M,g)$ be a compact connected manifold with strictly convex boundary, no conjugate points and hyperbolic trapped set. Then the three following assertions are equivalent:
\begin{enumerate}
\item $I_m$ is injective on $\mathcal{C}^\infty_{\text{sol}} (M, \otimes^m_S T^*M)$,
\item For any $f \in \mathcal{C}^\infty_{\text{sol}}(M, \otimes^m_S T^*M)$, there exists a $w \in \cap_{p < + \infty} L^p(SM)$ such that $X w = 0$ and ${\pi_m}_* w = f$,
\item For any $u \in L^2_{\text{sol}}(M, \otimes^m_S T^*M)$, there exists $w \in H^{-1}(SM_e)$ such that $X w = 0$ and ${\pi_m}_* w = u$ on $M$.
\end{enumerate}
\end{theorem}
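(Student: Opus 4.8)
The plan is to route the entire equivalence through the normal operator $\Pi_m := {\pi_m}_* \Pi \pi_m^*$, where $\Pi = R_+ - R_-$ and $R_\pm$ are the (meromorphically continued) resolvents of $\mp X$ for the geodesic flow on the extended manifold $SM_e$. Two structural facts, both available from Guillarmou's analysis in \cite{Guillarmou-17-2}, will be used repeatedly: first, $\Pi$ maps into the kernel of $X$ and satisfies $X\Pi = \Pi X = 0$, so that $\Pi\pi_m^* h$ is an invariant distribution for any tensor $h$; second, $\Pi_m$ is a pseudodifferential operator of order $-1$ which is elliptic once restricted to solenoidal tensors, is non-negative, and satisfies $\Pi_m f = 0$ if and only if $I_m f = 0$ (reflecting the identity relating $\langle \Pi_m f, f\rangle$ to $\|I_m f\|^2_{L^2(\partial_-SM, d\mu_\nu)}$). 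This last point shows at once that assertion (1) is equivalent to the injectivity of $\Pi_m$ on solenoidal tensors. I will also use the commutation relation $D^*{\pi_m}_* = -{\pi_{m-1}}_* X$, obtained by dualising $X\pi_m^* = \pi_{m+1}^* D$; it guarantees that ${\pi_m}_* w$ is automatically solenoidal whenever $Xw = 0$, so the outputs in (2) and (3) land in the correct space with no extra projection.

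For $(2) \Rightarrow (1)$ I would argue directly. Suppose $f \in \mathcal{C}^\infty_{\text{sol}}$ satisfies $I_m f = 0$, and let $u$ be the primitive $u(z) = \int_0^{l_+(z)} (\pi_m^* f)(\varphi_t z)\, dt$, which solves $Xu = -\pi_m^* f$ and, because $I_m f = 0$, vanishes on all of $\partial SM$ (on $\partial_- SM$ it equals $I_m f$, and on $\partial_+ SM$ the escape time is $0$). Taking the invariant $w \in \cap_{p < +\infty} L^p(SM)$ with ${\pi_m}_* w = f$ provided by (2), one computes $\|f\|_{L^2}^2 = \langle {\pi_m}_* w, f\rangle = \langle w, \pi_m^* f\rangle = -\langle w, Xu\rangle = \langle Xw, u\rangle = 0$, the integration by parts producing no boundary term since $u|_{\partial SM} = 0$ and $X$ is divergence-free, together with $Xw = 0$. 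Hence $f = 0$. The only point requiring care is the justification of the pairing and of the integration by parts given the limited regularity of $u$ near $\Gamma_-$ and of $w$; this is handled by approximation and the mapping properties recorded in Proposition \ref{prop:im}.

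For $(1) \Rightarrow (3)$, assume $I_m$ is $s$-injective, so $\Pi_m$ is injective on solenoidal tensors. Since $\Pi_m$ is an elliptic, self-adjoint pseudodifferential operator of order $-1$ on solenoidal tensors, it is Fredholm of index zero between the relevant Sobolev scales, and injectivity upgrades it to an isomorphism $\Pi_m : H^{-1}_{\text{sol}} \to L^2_{\text{sol}}$ (after solenoidal projection). Given $u \in L^2_{\text{sol}}$, set $h := \Pi_m^{-1} u \in H^{-1}_{\text{sol}}$ and $w := \Pi \pi_m^* h$. Then $Xw = 0$ because $X\Pi = 0$, the mapping properties of $\Pi$ give $w \in H^{-1}(SM_e)$, and ${\pi_m}_* w = \Pi_m h = u$ on $M$, which is (3). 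The delicate points here are the Fredholm/index bookkeeping for $\Pi_m$ on a manifold with boundary and the compatibility of the solenoidal decompositions associated to $M$ and to $M_e$; both are treated with the microlocal calculus and cutoff/extension arguments of \cite{Guillarmou-17-2}, and the choice of $M_e$ without non-trivial Killing tensors is used to rule out spurious kernel.

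Finally, for $(3) \Rightarrow (2)$ I would run the same construction on smooth data and then upgrade the regularity. Given $f \in \mathcal{C}^\infty_{\text{sol}}$, the inversion $h = \Pi_m^{-1} f$ yields an invariant $w = \Pi\pi_m^* h$ with ${\pi_m}_* w = f$; since $f$ is smooth and $\Pi_m$ is elliptic of order $-1$, $h$ is smooth, so the only singularities of $w$ are those forced by the hyperbolic trapped set, carried by the stable and unstable conormal directions over $K$. The hyperbolicity of $K$ (through the decay of the non-escaping mass function $V(t)$) then places $w|_{SM}$ in $\cap_{p < +\infty} L^p(SM)$, and restricting to $SM$ preserves both invariance and the identity ${\pi_m}_* w = f$, giving (2). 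I expect the main obstacle of the whole argument to be concentrated in $(1)\Rightarrow(3)$ and $(3)\Rightarrow(2)$: namely, promoting injectivity of $\Pi_m$ to surjectivity in the precise Sobolev and $L^p$ classes while simultaneously controlling the wavefront set of the invariant distribution $w$ at the trapped set and reconciling the solenoidal decompositions attached to $M$ and to its extension $M_e$.
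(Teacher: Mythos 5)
Your overall architecture (energy identity for the easy direction, surjectivity of $r_M{\pi_m}_*\Pi^e\pi_m^*$ for the hard one) matches the paper's, but the crucial step $(1)\Rightarrow(3)$ contains a genuine gap. You claim that $\Pi_m$, being elliptic, self-adjoint and of order $-1$ on solenoidal tensors, is ``Fredholm of index zero,'' so that injectivity upgrades it to an isomorphism $H^{-1}_{\text{sol}}\to L^2_{\text{sol}}$. This is not an available argument: the relevant operator is $P = r_M\Pi^e_m : H^{-1}_{\text{comp}}(M_e^\circ,\otimes^m_S T^*M_e^\circ)\to L^2_{\text{sol}}(M,\otimes^m_S T^*M)$, which acts between \emph{different} spaces (compactly supported distributions on the extension versus solenoidal tensors on $M$), so it is neither self-adjoint nor subject to an index-zero bookkeeping; moreover its injectivity is irrelevant and in fact fails (it has a huge kernel containing potentials). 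Surjectivity must be proved by combining a parametrix/Fredholm argument (via the extension operator $E$ of Paternain--Zhou, which is where the absence of nontrivial Killing tensors on $M_e$ enters) with injectivity of $P^*$, and the kernel of $P^*$ consists of distributions $E_0f$ supported in $M$ whose solenoidal--potential decomposition $E_0f = q + Dp_0$ has a potential part \emph{singular at $\partial M$}. Consequently, s-injectivity of $I_m$ --- a statement about smooth tensors on $M$ --- cannot be applied directly to elements of $\ker P^*$; the paper's proof of Lemma \ref{lemm:surj1} needs a regularity bootstrap ($q$ smooth by ellipticity of $\Pi^e_m$ and $q=-Dp_0$ outside $M$), the construction of a smooth $p_1$ with $p_1=p_0$ and $Dp_1=-q$ on $M_e\setminus M$, the application of s-injectivity to the smooth tensor $Dp_1+q$ supported in $M$, unique continuation for $Dp$, and a second parametrix to conclude smoothness of $Dp$. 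Deferring all of this to ``cutoff/extension arguments of \cite{Guillarmou-17-2}'' does not close the gap: this duality argument is the technical heart of the theorem and the exact point where the hypothesis ``$I_m$ s-injective on $M$'' (rather than on $M_e$) is exploited.

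Two further steps are incorrect as stated, though patchable. In $(3)\Rightarrow(2)$, the claim that $h=\Pi_m^{-1}f$ is smooth ``by ellipticity'' presupposes an elliptic inverse up to the boundary which does not exist here (the paper proves a \emph{separate} smooth surjectivity statement, Lemma \ref{lemm:surj1}), and the inference from wavefront-set localization over $K$ to $w|_{SM}\in\cap_{p<\infty}L^p(SM)$ is invalid: wavefront sets carry no $L^p$ information. The paper instead writes $w = I^*\varphi$ with $\varphi = \bigl({I^e}^*I^e_m u\bigr)|_{\partial_-SM}$ for a smooth compactly supported $u$, proves $\varphi\in L^p(\partial_-SM,d\mu_\nu)$ by averaging over the minimal transit time $\tau>0$ between $\partial_-SM$ and $\partial_-SM_e$, and invokes the $L^p$ mapping properties of Proposition \ref{prop:pi}, which rest on $l_\pm\in\cap_{p<\infty}L^p$ coming from $V(t)=O(e^{-\delta t})$. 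In $(2)\Rightarrow(1)$, your primitive $u = R_+(0)\pi_m^* f$ is unbounded near $\Gamma_-$ and only lies in $\cap_{q<\infty}L^q(SM)$, so the Green/trace formula behind your boundary-term-free integration by parts is exactly what needs proof and is not supplied by ``approximation and Proposition \ref{prop:im}''; the paper sidesteps this by the smooth Livšic theorem \cite[Proposition 5.5]{Guillarmou-17-2}, which yields $h\in\mathcal{C}^\infty(SM)$ with $Xh=\pi_m^*f$ and $h|_{\partial SM}=0$, so that only the invariant $w$ requires regularization (via the approximation lemma of Dyatlov--Zworski). These last two issues can be repaired by citing the right tools, but the $(1)\Rightarrow(3)$ gap is a missing proof, not a missing citation.
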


In the case of a surface satisfying the hypothesis of the previous theorem, we are able to prove the second item, which in turn implies Theorem \ref{th1}:

\begin{theorem}
\label{th4}
If $(M,g)$ is a surface satisfying the assumptions of Theorem \ref{th1}. Then for any $f \in \mathcal{C}^\infty_{\text{sol}}(M, \otimes^m_S T^*M)$, there exists a $w \in \cap_{p < + \infty} L^p(SM)$ such that $X w = 0$ and ${\pi_m}_* w = f$
\end{theorem}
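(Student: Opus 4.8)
The plan is to build $w$ by working in the fiberwise Fourier decomposition of $L^2(SM_e)$ along the circle fibers and only at the end restricting to $SM$. Writing $w = \sum_{k \in \Z} w_k$ for the decomposition into fiber harmonics of degree $k$, and using the surface frame identity $X = \eta_+ + \eta_-$, where $\eta_\pm : \Omega_k \to \Omega_{k \pm 1}$ are the raising and lowering operators, the invariance condition $Xw = 0$ becomes the coupled recursion $\eta_+ w_{k-1} + \eta_- w_{k+1} = 0$ for every $k$. Since $\pi_m^*$ identifies trace-free symmetric $m$-tensors with the top harmonics $\Omega_{\pm m}$, and since ${\pi_m}_* w$ only sees the modes $w_k$ with $|k| \le m$ (of the parity of $m$), the equation ${\pi_m}_* w = f$ amounts to prescribing these finitely many central modes from the data $f$: the extreme modes $w_{\pm m}$ from the trace-free part and the intermediate ones from the successive traces. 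Peeling off traces, one inducts on $m$ and reduces to fixing $w_{\pm m}$ and then extending to a full invariant solution of the recursion along the two half-infinite towers $\{w_k\}_{k \ge m+1}$ and $\{w_k\}_{k \le -m-1}$.

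To produce the invariant object itself I would use Guillarmou's resolvents. The forward and backward resolvents $R_\pm$, defined on $SM \setminus \Gamma_\mp$ by integration along the flow and meromorphically continued to anisotropic Sobolev spaces in the spirit of Faure--Sjöstrand and Dyatlov--Guillarmou, both solve $X u = h$, so their difference $(R_+ - R_-)h$ is automatically annihilated by $X$. Applying this to $h = \pi_m^* f$ gives an invariant distribution, but its pushforward is the normal operator $I_m^* I_m f$ rather than $f$; the real point is therefore to correct the source so that the pushforward is \emph{exactly} $f$, and the surface structure allows this without inverting the normal operator (which would be circular, since invertibility is equivalent to the injectivity we are after).

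The surface-specific ingredient that removes the non-positive curvature hypothesis of \cite{Guillarmou-17-2} is the use of holomorphic integrating factors à la Paternain--Salo--Uhlmann \cite{Paternain-Salo-Uhlmann-13}: because the fibers are circles, the recursion $\eta_- w_{k+1} = -\eta_+ w_{k-1}$ can be solved upward (and symmetrically downward) via the mapping properties of $\eta_\pm$ on high Fourier modes, and a holomorphic factor trades the degree-$m$ problem for a degree-$0$ problem, where Guillarmou's results for $m=0$ already supply an invariant distribution with the prescribed pushforward. I would carry this out on the extension $M_e$, where the flow is complete in the relevant sense and no boundary terms intervene in the $\eta_\pm$ manipulations, and only then restrict to $SM$, recording that the restriction still satisfies $Xw = 0$ and has the correct pushforward on $M$; this is precisely the mechanism underlying item~(3) of Theorem \ref{th2}.

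The main obstacle, and where the hyperbolicity of $K$ is indispensable, is regularity. The distribution built above is a priori singular along the tails $\Gamma_\pm$ up to the trapped set, and one must upgrade it to lie in $\bigcap_{p < \infty} L^p(SM)$. Here I would combine the decay of the non-escaping mass function $V(t)$ furnished by Proposition \ref{prop:hyp} with the $L^p$ mapping bounds of Proposition \ref{prop:im}, and with the microlocal description of the resonant states at the zero resonance, to control the two Fourier towers and the $(R_+ - R_-)$-type terms in $L^p$ for every finite $p$. The delicate step is the uniform summation of the infinite Fourier tower near $K$ — equivalently, showing that the resonant states carry no worse than the expected anisotropic singularity along $\Gamma_\pm$ — which is exactly what the hyperbolic trapped set buys, and which is also what forces the weak $\bigcap_{p} L^p$ regularity rather than smoothness in the statement.
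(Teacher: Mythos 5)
Your proposal identifies the right arena (fiberwise Fourier modes, the operators $\eta_\pm$, the recursion $\eta_+ w_{k-1} + \eta_- w_{k+1} = 0$, the need for $\cap_{p<\infty}L^p$ regularity), but the two mechanisms you offer for actually producing $w$ both fail in the trapped setting. First, you propose to fix the pushforward ``without inverting the normal operator'' by means of holomorphic integrating factors à la \cite{Paternain-Salo-Uhlmann-13}, trading the degree-$m$ problem for a degree-$0$ one. Holomorphic integrating factors are obtained on \emph{simple} surfaces from the surjectivity of $I_0^*$ (Pestov--Uhlmann), which rests essentially on the absence of trapping; when $K \neq \emptyset$, solutions of $Xu = h$ are singular along $\Gamma_\pm$ and the resolvents $R_\pm(0)$ do not preserve fiberwise holomorphy (the flow mixes all Fourier modes), so no such factors are known to exist here --- circumventing them is precisely the point of \cite{Guillarmou-17-1} and of this paper. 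Moreover, even on simple surfaces the tensor problem does not reduce to plain $m=0$: Paternain--Salo--Uhlmann go through attenuated transforms with connections. Second, your fallback --- solving the recursion up the two half-infinite towers $\{w_k\}_{k \geq m+1}$, $\{w_k\}_{k \leq -m-1}$ and then summing with uniform $L^p$ control near $K$ --- is exactly the hard analytic problem, and neither the decay of $V(t)$ nor the microlocal description of resonant states at the zero resonance is known to yield bounds uniform in the Fourier degree $k$; you name the difficulty but supply no mechanism, and the hypothesized s-injectivity needed to make the normal-operator surjectivity work at degree $m$ is what we are trying to prove, so that route would be circular.

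The paper's proof avoids all of this: no tower is ever solved or summed, and no integrating factor is constructed. By \cite[Lemma 7.2]{Paternain-Zhou-16} the statement reduces to: given $a_m \in \Omega_m$ with $\eta_- a_m = 0$, find invariant $\omega \in \cap_{p<\infty} L^p(SM)$ with ${\pi_m}_*\omega = {\pi_m}_* a_m$. The canonical line bundle of a compact surface with boundary is holomorphically trivial (\cite[Theorem 30.3]{Forster-81}), so $a_m$ factors as $a_m = f(1)\cdots f(m)$ with each $f(i) \in \Omega_1$, $\eta_- f(i) = 0$. Each ${\pi_1}_* f(i)$ is a smooth solenoidal $1$-form, and the \emph{known} s-injectivity of $I_1$ (\cite[Theorem 5]{Guillarmou-17-2}) combined with Theorem \ref{th2} --- so the injectivity hypothesis is only ever invoked at $m=1$, breaking the circularity --- yields odd invariant $w(i) \in \cap_{p<\infty} L^p(SM_e)$ with ${\pi_1}_* w(i) = {\pi_1}_* f(i)$ (Lemma \ref{coro:surj1}); matching modes forces $w(i)_1 = f(i)$ and $w(i)_{-1} = 0$. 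The commutation relation of Lemma \ref{lemm:co} then gives $X S(w(i)) = 0$ for the Szeg\"o projector $S$, which is bounded on $L^p$ via the fiberwise Hilbert transform and interpolation (Lemma \ref{prop:sbounded}), and since $\cap_{p<\infty}L^p$ is an algebra, $\omega := S(w(1))\cdots S(w(m))$ is invariant, lies in $\cap_{p<\infty}L^p$, and has lowest Fourier mode $\omega_m = a_m$ with $\omega_l = 0$ for $l < m$, whence ${\pi_m}_*\omega = {\pi_m}_* a_m$. Note where the regularity actually comes from: Proposition \ref{prop:pi} at the level of $I^*$, plus classical harmonic analysis on the circle --- not from any dynamical summation near $K$, contrary to the last paragraph of your sketch. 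These load-bearing steps (the Paternain--Zhou reduction to holomorphic top modes, the factorization through the trivial canonical bundle, the $m=1$ input, and the Szeg\"o-projected product) are absent from your proposal, so as it stands it does not constitute a proof.
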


Eventually, a corollary of Theorem \ref{th1} is a deformation rigidity result relative to the lens data, which completes \cite[Theorem 4]{Guillarmou-17-2}. The lens data with respect to the metric $g$ is the pair $(\sigma^g,l_+^g)|_{\partial_-SM}$, where $l_+^g$ is the exit time function and $\sigma^g : (x,v) \mapsto \varphi_{l_+(x,v)}(x,v)$ is the scattering map. We refer to the introduction of \cite{Guillarmou-17-2}, or the lecture notes \cite{Paternain-lecture-notes} for further details.

\begin{corollary}
Assume that $M$ is a smooth compact surface with boundary equipped with a smooth $1$-parameter family of metrics $(g_s)_{s \in (-1,1)}$ satisfying the assumptions of Theorem \ref{th1} which are lens equivalent (i.e. the lens data agree). Then, there exists a smooth family of diffeomorphisms $(\phi_s)_{s \in (-1,1)}$ such that $\phi_s^*g_s=g_0$ and $\phi_s|_{\partial M}=\text{id}$.
\end{corollary}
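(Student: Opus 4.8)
The plan is to run the classical deformation (integration of infinitesimal rigidity) argument: differentiate the lens-equivalence in the parameter $s$, use the $s$-injectivity of $I_2$ furnished by Theorem \ref{th1} to express the variation of the metric as a pure gauge term, and then integrate this infinitesimal gauge into an honest family of boundary-fixing diffeomorphisms. Set $\dot{g}_s := \partial_s g_s$, a smooth family of symmetric $2$-tensors.

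\medskip

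First I would linearize the hypothesis. Since the lens data $(\sigma^{g_s}, l_+^{g_s})|_{\partial_- SM}$ are independent of $s$, the standard first-variation formula for the lens data identifies the $s$-derivative with the second-order X-ray transform, yielding $I_2^{g_s}\dot{g}_s = 0$ in $L^2(\partial_- SM, d\mu_\nu)$ for every $s$; this holds a.e., which suffices since the trapped set has zero Liouville measure under the hyperbolicity assumption (Proposition \ref{prop:im}). A minor technical point is that the initial covectors must be normalized consistently as the unit sphere bundle varies with $g_s$; one handles this by first composing with a smooth family of boundary-fixing diffeomorphisms so that all the $g_s$ agree to infinite order along $\partial M$ (the lens data determine the jet of the metric at the strictly convex boundary), after which $\dot{g}_s$ vanishes to infinite order on $\partial M$ and the variation computation is transparent.

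\medskip

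Next, for each fixed $s$ I would apply the solenoidal decomposition \eqref{eq:decomp} relative to $g_s$, writing $\dot{g}_s = f_s^{\mathrm{sol}} + D_{g_s} p_s$ with $D_{g_s}^* f_s^{\mathrm{sol}} = 0$ and $p_s|_{\partial M}=0$. Because $I_2^{g_s}(D_{g_s}p_s)=0$ whenever $p_s|_{\partial M}=0$, we get $I_2^{g_s} f_s^{\mathrm{sol}}=0$, and Theorem \ref{th1} forces $f_s^{\mathrm{sol}}=0$. Hence $\dot{g}_s = D_{g_s} p_s = \tfrac{1}{2}\mathcal{L}_{V_s} g_s$, where $V_s := p_s^\sharp$ is the $g_s$-dual vector field, which vanishes on $\partial M$ since $p_s|_{\partial M}=0$. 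Solving the non-autonomous ODE $\partial_s \phi_s = -\tfrac{1}{2}V_s\circ\phi_s$ with $\phi_0=\mathrm{id}$ produces a smooth family of diffeomorphisms; as $V_s|_{\partial M}=0$ the flow fixes the boundary, so $\phi_s|_{\partial M}=\mathrm{id}$ and $\phi_s(M)=M$. The pullback identity
\[ \frac{d}{ds}\left( \phi_s^* g_s \right) = \phi_s^* \left( \dot{g}_s - \tfrac{1}{2}\mathcal{L}_{V_s} g_s \right) = 0 \]
then gives $\phi_s^* g_s = \phi_0^* g_0 = g_0$, which is the claim.

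\medskip

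The main obstacle I anticipate is not any single identity but the \emph{smooth dependence on the parameter}: one must verify that the solenoidal gauge $p_s$, hence $V_s$, depends smoothly on $s$, which follows from the smooth dependence on $g_s$ of the elliptic boundary value problem defining \eqref{eq:decomp} (invertibility of the Dirichlet operator $D^*D$), and then that the resulting time-dependent flow is itself smooth in $s$. The linearization step additionally relies on the boundary normalization described above, which is precisely where strict convexity of $\partial M$ is used.
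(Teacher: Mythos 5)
Your proposal is correct and is essentially the paper's own proof: the paper disposes of the corollary in one line by citing \cite[Section 5.3]{Guillarmou-17-2}, and the argument there is exactly your scheme --- linearize the lens equivalence to obtain $I_2^{g_s}\dot g_s = 0$, invoke the s-injectivity from Theorem \ref{th1} to write $\dot g_s = D_{g_s}p_s$ with $p_s|_{\partial M}=0$, and integrate the time-dependent field $p_s^\sharp$ into a boundary-fixing family of diffeomorphisms. The technical points you flag (boundary-jet normalization at the strictly convex boundary, smooth dependence of $p_s$ on $s$ via the elliptic Dirichlet problem for $D^*D$) are precisely the details carried out in that reference, so nothing is missing.
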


The proof directly stems from the injectivity of the X-ray transform over solenoidal $2$-tensors (see \cite[Section 5.3]{Guillarmou-17-2}). \\

\noindent \textbf{Acknowledgements:} We thank Colin Guillarmou for suggesting this result and fruitful discussions during the redaction of this paper. We are also grateful to the anonymous referees for their careful reading. In particular, one of the referees pointed out to us an argument (see the footnote in the proof of Lemma \ref{lemm:surj0}) which strengthened the initial version of Theorem \ref{th2}, allowing us to relax the condition ``$I^e_m$ is s-injective" to ``$I_m$ is s-injective". We warmly thank him for the time he devoted to improving this condition. This project has received funding from the European Research Council (ERC) under the European Union’s Horizon 2020 research and innovation programme (grant agreement No. 725967).

\section{The geometric setting}

\subsection{Hyperbolicity of the trapped set}

\label{ssect:hyp}

We assume that the trapped set $K$ of the manifold $(M,g)$ is hyperbolic, that is there exists some constants $C > 0$ and $\nu > 0$ such that for all $z \in K$, there is a continuous flow-invariant splitting
\be \label{eq:split} T_z(SM) = \R X(z) \oplus E_u(z) \oplus E_s(z), \ee
where $E_s(z)$ (resp. $E_u(z)$) is the \textit{stable} (resp. \textit{unstable}) vector space in $z$, which satisfy
\be \begin{array}{c} |d\varphi_t(z) \cdot v|_{\varphi_t(z)} \leq C e^{-\nu t} |v|_{z}, ~~ \forall t > 0, v \in E_s(z) \\
|d\varphi_t(z) \cdot v|_{\varphi_t(z)} \leq C e^{-\nu |t|} |v|_{z}, ~~ \forall t < 0, v \in E_u(z)\end{array} \ee
The norm, here, is given in terms of the Sasaki metric. We now introduce the usual definitions of \textit{stable} and \textit{unstable manifolds} (see \cite{Hasselblatt-Katok-95} for a classical reference on hyperbolic dynamical systems).

\begin{definition}
For each $z \in K$, we define the \textit{global stable} and \textit{unstable manifolds} $W_s(z), W_u(z)$ by:
\[ \begin{array}{c} W_s(z) = \left\{ z' \in S M_e^\circ, d(\varphi_t(z), \varphi_t(z')) \rightarrow_{t \rightarrow +\infty} 0 \right\} \\
W_u(z) = \left\{ z' \in S M_e^\circ, d(\varphi_t(z), \varphi_t(z')) \rightarrow_{t \rightarrow +\infty}  0 \right\} \end{array}\]
For $\varepsilon > 0$ small enough, we define the \textit{local stable} and \textit{unstable manifolds} $W_s^\varepsilon(z) \subset W_s(z), W_u^\varepsilon(z) \subset W_u(z)$ by:
\[ \begin{array}{c} W_s^\varepsilon(z) = \left\{ z' \in W_s(z), \forall t \geq 0, d(\varphi_t(z), \varphi_t(z')) \leq \varepsilon \right\} \\
W_u^\varepsilon(z) = \left\{ z' \in W_u(z), \forall t \geq 0, d(\varphi_{-t}(z), \varphi_{-t}(z')) \leq \varepsilon \right\} \end{array}\]
They are properly embedded disks containing $z$. Eventually, we define:
\[ W_s(K) = \cup_{z \in K} W_s(z), ~~~ W_u(K) = \cup_{z \in K} W_u(z) \]
\end{definition}
Let us now mention some properties of these sets, and relate them to the tails $\Gamma_\pm$. First, we have:
\[ \forall z \in K, \forall t \geq 0, \varphi_t(W_s^\varepsilon(z)) \subset W_s^\varepsilon(\varphi_t(z)), \varphi_{-t}(W_u^\varepsilon(z)) \subset W_u^\varepsilon(\varphi_{-t}(z)) \]
And:
\[ T_z W_s^\varepsilon(z) = E_s(z), ~~~ T_z W_u^\varepsilon(z) = E_u(z) \]
Since the trapped set $K$ is hyperbolic, we also have (see \cite[Lemma 2.2]{Guillarmou-17-2}) the equalities:
\[ \Gamma_- = W_s(K), ~~~ \Gamma_+ = W_u(K) \]

Given $z_0 \in K$, the stable (resp. unstable) space of the decomposition (\ref{eq:split}) can be extended to points $z \in W_s^\varepsilon(z_0)$ (resp. $W_u^\varepsilon(z_0)$) by $E_-(z) := T_z W_s^\varepsilon(z_0)$ (resp. $E_+(z) := T_z W_u^\varepsilon(z_0)$). In particular, note that $E_-(z) = E_s(z), E_+(z) = E_u(z)$ for $z \in K$. These subbundles can once again be extended by propagating them by the flow to subbundles $E_\pm \subset T_{\Gamma_\pm} SM_e$ over $\Gamma_\pm$. Let $T_K^* SM$ denote the restriction of the cotangent bundle of $SM$ to $K$. The flow-invariant splitting (\ref{eq:split}) of the tangent space between stable, unstable and flow directions admits a dual splitting which is also invariant by the flow and defined as $T_z^*(SM) = E_0^*(z) \oplus E_s^*(z)\oplus E_u^*(z)$, for $z \in K$, with:
\be E_u^*(E_u \oplus \R X) = 0, ~~~ E_s^*(E_s \oplus \R X) = 0, ~~~ E_0^*(E_u \oplus E_s) = 0 \ee
Now, this splitting naturally extends to the tails $\Gamma_\pm$ by defining the flow-invariant subbundles $E^*_\pm \subset T^*_{\Gamma_\pm} SM_e$ by:
\be E^*_\pm(E_\pm \oplus \R X) = 0, \ee
over $\Gamma_\pm$. In particular, $E^*_-(z) = E_s^*(z), E^*_+(z) = E^*_u(z)$ for $z \in K$. These sets can be seen as conormal bundles to $\Gamma_\pm$. They will be used in order to describe the wavefront set of the operator $\Pi$ (see \S\ref{ssect:fun}). Eventually, we define the \textit{escape rate} $Q \leq 0$ which measures the exponential rate of decay of the non-escaping mass function $V$:
\be Q = \limsup_{t \rightarrow + \infty} t^{-1}\log V(t) \ee

In particular, it is possible to prove that if $K$ is hyperbolic, the following properties hold (see \cite[Proposition 2.4]{Guillarmou-17-2}):

\begin{proposition}
\label{prop:hyp}
\begin{enumerate}
\item $\mu(\Gamma_- \cup \Gamma_+) = 0$,
\item $\tilde{\mu}(\Gamma_\pm \cap \partial_\pm SM) = 0$, where $\tilde{\mu}$ is the measure on $\partial SM$ induced by the Sasaki metric,
\item Q < 0
\end{enumerate}
\end{proposition}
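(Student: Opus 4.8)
The plan is to prove the three items in the order (3), (1), (2), since the exponential decay of the non-escaping mass is the genuine dynamical input and the two measure-zero statements follow from it by the machinery already available. Throughout I use that hyperbolicity of $K$ gives the flow-invariant splitting \eqref{eq:split} with uniform constants $C,\nu>0$, and that $\Gamma_-=W_s(K)$, $\Gamma_+=W_u(K)$.

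I would begin with (3). The strategy is to localize near $K$: using strict convexity of $\partial M$ together with the bound $L$ on geodesic rays outside $M$, one first shows that for any small neighborhood $U$ of $K$ in $SM_e$ there is a time $\tau_U<+\infty$ bounding the length of any orbit segment that stays in $SM$ while avoiding $U$ (otherwise an Arzelà--Ascoli/limiting argument would produce a forward-trapped orbit staying out of $U$, i.e. a point of $\Gamma_-=W_s(K)$ never entering $U$, which is impossible). Inside $U$ I would exploit the hyperbolic splitting in coordinates adapted to $\R X\oplus E_u\oplus E_s$: a point can remain in $U$ up to time $t$ only if its component transverse to $W_s(K)$ along the unstable directions is $O(e^{-\nu t})$, because \eqref{eq:split} dilates that component by $\sim e^{\nu s}$ and forces escape from $U$ once it reaches size $\sim\eps$. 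Hence the portion of $\mathcal{T}_+(t)$ sitting in $U$ is an $e^{-\nu t}$-thin tube about $W_s(K)\cap U$ in the unstable direction, and Fubini in the $(E_u,E_s,\R X)$ coordinates gives an exponentially small contribution. The main obstacle is precisely the bookkeeping for orbits that enter and leave $U$ repeatedly: one must cover $U$ by finitely many flow boxes on which $C,\nu$ are uniformly controlled and sum the transverse contractions over successive returns, which is exactly the escape-rate estimate for open hyperbolic systems. Carrying this out yields $V(t)\le C'e^{-\nu' t}$ for some $\nu'>0$, whence $Q=\limsup_{t\to\infty}t^{-1}\log V(t)<0$.

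Item (1) is then immediate. The sets $\mathcal{T}_+(t)$ are nested decreasing in $t$ with $\cap_{t\ge 0}\mathcal{T}_+(t)=\Gamma_-$, so by monotone convergence $\mu(\Gamma_-)=\lim_{t\to\infty}V(t)=0$ by (3). Running the same estimate for the backward flow, whose non-escaping mass decays by the time-reversal symmetry of \eqref{eq:split}, gives $\mu(\Gamma_+)=0$, and therefore $\mu(\Gamma_-\cup\Gamma_+)=0$.

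Finally I would deduce (2) from (1) using Santaló's formula (Lemma \ref{lemm:sant}), now legitimate since $K\subseteq\Gamma_-$ forces $\mu(K)=0$. Set $A=\Gamma_-\cap\partial_-SM$ and, for small $\eps>0$, consider the tube $T=\{\varphi_s(x,v):(x,v)\in A,\ s\in(0,\eps)\}$; flow-invariance of $\Gamma_-$ gives $T\subseteq\Gamma_-$, so $\mu(T)=0$ by (1). Applying Lemma \ref{lemm:sant} to $\mathbf{1}_T$ and noting that for $(x,v)\in A$ one has $l_+(x,v)=+\infty$ with $\varphi_s(x,v)\in T$ for $s\in(0,\eps)$, the inner integral is at least $\eps$, so $0=\mu(T)\ge\eps\,\mu_\nu(A)$ and thus $\mu_\nu(A)=0$. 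It remains to pass from $d\mu_\nu=|g_x(v,\nu)|\,i^*d\mu$ to the Sasaki measure $\tilde\mu=i^*d\mu$: away from the glancing set $\partial_0SM$ the density $|g_x(v,\nu)|$ is positive, so $\mu_\nu(A)=0$ forces $\tilde\mu(A\setminus\partial_0SM)=0$, while $\partial_0SM=S(\partial M)$ has dimension $2n-3<2n-2=\dim\partial SM$ and is hence $\tilde\mu$-null; therefore $\tilde\mu(A)=0$. The statement for $\Gamma_+\cap\partial_+SM$ follows verbatim by time reversal.
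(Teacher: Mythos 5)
The proposition is not proved in the paper at all: the text introduces it with ``see \cite[Proposition 2.4]{Guillarmou-17-2}'', so the comparison must be made with the argument cited there, which rests on Bowen--Ruelle theory for the escape rate. Within your proposal, the reductions $(3)\Rightarrow(1)$ and $(1)\Rightarrow(2)$ are correct: $\mathcal{T}_+(t)$ decreases to $\Gamma_-$ so $\mu(\Gamma_-)=\lim_{t\to\infty}V(t)$, time reversal together with flow-invariance of $\mu$ handles $\Gamma_+$, and your Santal\'o argument for (2), including the treatment of the vanishing density $|g_x(v,\nu)|$ on the glancing set $\partial_0 SM$, is in the same spirit as the cited proof. (Two small remarks: you can apply Lemma \ref{lemm:sant} directly to $\mathbf{1}_{\Gamma_-}$, whose inner integral is infinite on $A$, and dispense with the $\eps$-tube $T$, which also avoids a measurability quibble about $T$; and your use of item (1) to secure $\mu(K)=0$ before invoking Lemma \ref{lemm:sant} is legitimate since you prove (3) first.)

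The genuine gap is in (3), at the step asserting that the portion of $\mathcal{T}_+(t)$ inside $U$ is an $e^{-\nu t}$-thin tube about $W_s(K)\cap U$ in the unstable direction ``and Fubini in the $(E_u,E_s,\R X)$ coordinates gives an exponentially small contribution''. The containment in the thin tube is standard (shadowing plus uniform cone fields), and your compactness bound $\tau_U$ for excursions outside $U$ is fine, but the volume estimate does not follow from Fubini: the transverse slice of the lamination $W_s(K)$ in the unstable directions is a Cantor set (the unstable slice of $K$), and a compact null set can have $\delta$-neighbourhoods whose volume decays slower than any power of $\delta$ (its box-counting dimension can be maximal even when its measure is zero). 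With $\delta=e^{-\nu t}$ this yields only $V(t)=o(1)$, perfectly compatible with $Q=0$, so the exponential smallness you claim is exactly equivalent to a strict transverse dimension/pressure gap --- which is the actual content of the Bowen--Ruelle escape-rate theorem: $Q$ is bounded by the topological pressure of minus the unstable Jacobian on $K$, and this pressure is negative if and only if $K$ is not an attractor (equality would force an invariant measure on $K$ with Pesin equality, i.e.\ an SRB measure, making $W_u(K)\subset K$). Symptomatically, your sketch never uses that the geodesic flow preserves the Liouville measure: for a general flow whose hyperbolic trapped set is an attractor, $W_s(K)$ fills a whole neighbourhood of $K$, your tube statement becomes vacuous, and $V(t)\not\to 0$, so the argument as written would ``prove'' a false statement. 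Volume preservation (together with $K\neq SM$, guaranteed by the strictly convex boundary) is the indispensable input ruling out the attractor case, and it is precisely what the cited proof exploits. Note finally the circularity risk in your ordering: since you derive (1) from (3), you cannot feed $\mu(\Gamma_-)=0$, let alone a dimension bound, into the tube estimate --- and even $\mu(\Gamma_-)=0$ would be strictly weaker than the box-dimension gap the Fubini step needs.
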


Note that usually, $K$ has Hausdorff dimension $\text{dim}_H(K) \in [1,2n-1)$, where $n = \text{dim}(M)$. An immediate consequence of the previous Proposition is that there exists a constant $\delta > 0$ such that $V(t) = O(e^{-\delta t})$ which, in turn, proves the second item of Proposition \ref{prop:im}.

\subsection{The operators $I_m, I_m^*$ and $\Pi$}

\subsubsection{Action on $L^p$ spaces}

\label{ssect:fun}

One of the main ideas at the root of the recent developments in inverse problems the past few years has been to link the X-ray transform $I$ to the resolvent of the operator $X$ (seen as a differential operator), when acting on some anisotropic Sobolev spaces adapted to the hyperbolic decomposition (see \cite[Section 4]{Guillarmou-17-2} for instance). We define for $\lambda \in \C$ the resolvents
\[ R_\pm(\lambda) : \mathcal{C}^\infty_{\text{comp}}(SM^\circ \setminus \Gamma_\pm) \rightarrow \mathcal{C}^\infty(SM) \]
by the formulas:
\be R_+(\lambda) f(z) = \int_0^\infty e^{- \lambda t} f(\varphi_t(z)) dt, ~~ R_-(\lambda)f(z) = - \int_{-\infty}^0 e^{\lambda t} f(\varphi_t(z)) dt \ee
They satisfy the relations $(-X \pm \lambda)R_\pm(\lambda) f = f$. For $f \in \mathcal{C}^\infty_{\text{comp}}(SM^\circ \setminus (\Gamma_+ \cup \Gamma_-))$, we define operator
\[ \Pi f := (R_+(0) - R_-(0)) f, \]
and one can check that for such a function $f$, we also have $\Pi f = I^* I f$, the normal operator. These operators can also be defined on the manifold $M_e$ and we will add an index $e$ ($\Pi^e$ for instance) in order not to confuse them. The idea is now to extend the operator $\Pi$ to larger sets of functions (like $L^p$ spaces) and to deduce from this the action of $I$ and $I^*$ on these sets.

\begin{proposition}
\label{prop:pi}
Let $1 < p \leq + \infty$, then:
\[ \begin{array}{c} \Pi : L^p(SM) \rightarrow \cap_{q < p} L^q(SM), \\
I : L^p(SM) \rightarrow \cap_{q < p} L^q(\partial_- SM, d\mu_\nu), \\
I^* : L^p(\partial_- SM, d\mu_\nu) \rightarrow \cap_{q < p} L^q(SM)\end{array} \]
are bounded. 
\end{proposition}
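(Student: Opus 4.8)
The plan is to reduce all three boundedness statements to a single \emph{moment estimate} for the forward exit time, namely
\be \int_{\partial_- SM} l_+(x,v)^r \, d\mu_\nu(x,v) < \infty \qquad \text{for every } r \geq 0. \ee
To obtain this, I would first relate the distribution of $l_+$ to the non-escaping mass function. Applying Santaló's formula (Lemma \ref{lemm:sant}) to the indicator $\mathbf{1}_{\mathcal{T}_+(t)}$, and observing that for $(x,v) \in \partial_- SM$ one has $\varphi_s(x,v) \in \mathcal{T}_+(t)$ if and only if $s + t \leq l_+(x,v)$, the inner Santaló integral collapses to $(l_+(x,v)-t)_+$, giving
\be V(t) = \int_{\partial_- SM} \big(l_+(x,v) - t\big)_+ \, d\mu_\nu(x,v) = \int_t^{+\infty} F(s)\, ds, \qquad F(s) := \mu_\nu\big(\{l_+ > s\}\big). \ee
Since $F$ is nonincreasing and $\mu_\nu$ is finite, one has $F(t) \leq V(t-1)$; combined with the exponential decay $V(t) = O(e^{-\delta t})$ guaranteed by $Q<0$ (Proposition \ref{prop:hyp}), this yields $F(t) = O(e^{-\delta t})$ and hence $\int_{\partial_- SM} l_+^r \, d\mu_\nu = r\int_0^{+\infty} t^{r-1} F(t)\, dt < \infty$ for all $r$. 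This moment bound, which is really where hyperbolicity enters, is the crux of the whole argument.

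With it in hand, each operator is controlled by Hölder's inequality together with Santaló's formula. For $I$, Hölder in the $t$-variable gives $|If(x,v)|^q \leq l_+^{q(1-1/p)}\big(\int_0^{l_+}|f(\varphi_t(x,v))|^p\, dt\big)^{q/p}$; integrating over $\partial_- SM$ and applying Hölder once more with the conjugate exponents $p/q$ and $p/(p-q)$, the first factor becomes the finite moment $\int_{\partial_- SM} l_+^{q(p-1)/(p-q)}\, d\mu_\nu$ while the second becomes, by Santaló's formula, exactly $\|f\|_{L^p(SM)}^q$. Hence $I : L^p \to L^q(\partial_- SM, d\mu_\nu)$ is bounded for every $q<p$ (the case $p=+\infty$ being the trivial pointwise bound $|If| \leq l_+ \|f\|_\infty$). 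For $I^*$, using $I^*u(x,v)=u(\varphi_{l_-(x,v)}(x,v))$ and the fact that the backward-entry point of $\varphi_s(x,v)$ is $(x,v)$ itself, Santaló's formula yields the exact identity $\|I^*u\|_{L^q(SM)}^q = \int_{\partial_- SM} l_+\, |u|^q\, d\mu_\nu$; one then concludes by Hölder with exponents $p/q$ and $p/(p-q)$, which only requires the moment of order $p/(p-q)$.

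For $\Pi = R_+(0) - R_-(0)$ I would estimate each resolvent directly. Writing an interior point as $z=\varphi_s(x,v)$ with $(x,v)\in\partial_- SM$ and $s\in[0,l_+]$, one has $R_+(0)f(\varphi_s(x,v)) = \int_s^{l_+} f(\varphi_\tau(x,v))\, d\tau$, so Santaló's formula reduces $\|R_+(0)f\|_{L^q}^q$ to a boundary integral handled exactly as for $I$ (the $s$-integration produces one extra power of $l_+$, still absorbed by the moment bound); the term $R_-(0)$ is symmetric under time reversal. Alternatively one may compose the bounds for $I$ and $I^*$ through an intermediate exponent, since $\Pi = I^*I$ on $\mathcal{C}^\infty_{\text{comp}}(SM^\circ \setminus (\Gamma_+\cup\Gamma_-))$. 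All estimates are first proved on this dense subspace (legitimate because $\mu(\Gamma_\pm)=0$ by Proposition \ref{prop:hyp}, so it is dense in $L^p(SM)$ for $p<+\infty$) and then extended by continuity. The main obstacle is precisely the first paragraph: without hyperbolicity $l_+$ need not have finite moments, and it is the exponential decay of $V$ that renders all the exponents $q(p-1)/(p-q)$ and $p/(p-q)$ finite, explaining why one must lose an arbitrarily small amount of integrability ($q<p$) but no more.
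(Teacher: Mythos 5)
Your proof is correct, but it takes a genuinely different route from the paper's. Your crux---the boundary moment bound $\int_{\partial_-SM} l_+^r\, d\mu_\nu < \infty$, obtained from the identity $V(t)=\int_{\partial_-SM}(l_+-t)_+\,d\mu_\nu=\int_t^{+\infty}\mu_\nu(\{l_+>s\})\,ds$ (Santaló applied to $\mathbf{1}_{\mathcal{T}_+(t)}$)---does not appear in the paper, which only uses moments of $l_\pm$ as functions on $SM$ with respect to $\mu$, via $\mu(\{l_+>t\})=V(t)$ and Cavalieri. The paper then proceeds in the opposite order: it first bounds $R_\pm(0)$ (hence $\Pi$) directly on $SM$ by Jensen's inequality, Fubini and the flow-invariance of the Liouville measure, arriving at $\|R_+(0)f\|_{L^q}^q \le C\int_{SM}|f|^q\,(l_++|l_-|)^q\,d\mu$ and concluding with Hölder; it then deduces the bound on $I$ from that on $\Pi^e$ by a collar trick on the extension $SM_e$ (extending $f$ by zero, $\Pi^e f$ on the collar $\{\varphi_t(x,v):(x,v)\in\partial_-SM,\ 0\le t\le\varepsilon\}$ reproduces $If$), and finally obtains $I^*$ by duality. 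Your route handles all three operators uniformly through Santaló plus Hölder on $\partial_-SM$: it buys a self-contained argument on $M$ alone (no extension $M_e$, no duality step), the exact identity $\|I^*u\|_{L^q(SM)}^q=\int_{\partial_-SM}l_+\,|u|^q\,d\mu_\nu$, and explicit exponents such as $q(p-1)/(p-q)$; the paper's route buys a bound on $\Pi$ using only flow-invariance (no Santaló at that step) and gets $I^*$ for free by duality, at the price of invoking $M_e$. Both arguments rest on the same dynamical input, $V(t)=O(e^{-\delta t})$ coming from $Q<0$, together with the null sets of Proposition \ref{prop:hyp} (note that your identity re-derives $\mu_\nu(\{l_+=+\infty\})=0$ from $V(t)<\infty$, consistently with item (2) of that proposition). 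One small point to make explicit: for $p=+\infty$ the density argument fails, but the pointwise bounds $|R_\pm(0)f|\le (l_++|l_-|)\,\|f\|_\infty$ and $|I^*u|\le\|u\|_\infty$ cover $\Pi$ and $I^*$ just as your bound $|If|\le l_+\|f\|_\infty$ covers $I$; the same caveat implicitly affects the paper's own proof.
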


\begin{proof}
If $K$ is hyperbolic, then $l_+ \in L^p(SM)$, for any $p \in [1, + \infty)$. Indeed, one has $\mu \left(\left\{ l_+ > T \right\} \right) = V(T)$ and by Cavalieri's principle:
\[ \|l_+\|^p_{L^p(SM)} = \int_0^{+ \infty} t^{p-1} \mu \left(\left\{ l_+ > T \right\} \right) dt = \int_0^{+ \infty} t^{p-1} V(t) dt < + \infty, \]
since $V(t) = O(e^{- \delta t})$.

For $f \in \mathcal{C}^\infty_{\text{comp}}(SM^\circ \setminus \Gamma_-)$, let us write $u(x,v) = R_+(0)f (x,v) = \int_0^{+ \infty} f(\varphi_t(x,v)) dt$. We consider $1 \leq q < p$. We have, using Jensen inequality:
\[ \begin{split} \|u\|_{L^q(SM)}^q & = \int_{SM} \left| \int_0^{l_+(z)} f(\varphi_t(z)) dt \right|^q d\mu(z)  \\
& \leq \int_{SM} |l_+(z)|^{q-1} \int_0^{+ \infty} \mathbf{1}(\varphi_t(z) \in SM) |f(\varphi_t(z))|^q dt d\mu(z) \\ 
& = \int_0^{+ \infty} \int_{\mathcal{U}_t} |l_+(z)|^{q-1} |f(\varphi_t(z))|^q d\mu(z) dt, \end{split} \]
where $\mathcal{U}_t = \left\{ l_+(z) > t \right\}$, by applying Fubini in the last equality. For a fixed $t \geq 0$, we make the change of variable in the second integral $y = \varphi_t(z)$ and since the Liouville measure is preserved by the geodesic flow, we obtain:
\[ \begin{split} \|u\|_{L^q(SM)}^q & \leq \int_0^{+ \infty} \int_{\varphi_t(\mathcal{U}_t)} |l_+(\varphi_{-t}(y))|^{q-1} |f(y)|^q dt d\mu(y) \\
& = \int_{SM} |f(y)|^q \int_0^{+ \infty} \mathbf{1}(y \in \varphi_t(\mathcal{U}_t)) (l_+(y) + t)^{q-1} dt d\mu(y) \end{split} \]
But $y \in \varphi_t(\mathcal{U}_t) \cap SM$ if and only if $\varphi_t(y) \in SM$, that is if and only if $|l_-(y)| > t$. In other words, $\varphi_t(\mathcal{U}_t) \cap SM = \left\{ |l_-(y)| > t \right\}$. Thus:
\[ \begin{split} \|u\|_{L^q(SM)}^q & \leq \int_{SM} |f(y)|^q \int_0^{|l_-(y)|} (l_+(y) + t)^{q-1} dt d\mu(y) \\
& \leq C \int_{SM} |f(y)|^q \left( l_+(y) + |l_-(y)| \right)^q d\mu(y) \leq C \|f\|^q_{L^p(SM)}, \end{split} \]
using Hölder in the last inequality, and where $C > 0$ is a constant depending on $p$ and $q$. We cannot recover the $L^p$-norm of $f$ insofar as the functions $l_+, l_-$ are not $L^\infty$. By density of $\mathcal{C}^\infty_{\text{comp}}(SM^\circ \setminus \Gamma_-)$ in $L^p(SM)$, this proves that
\[ R_+(0) : L^p(SM) \rightarrow \cap_{q < p} L^q(SM) \]
extends as a bounded operator. The same arguments prove that
\[ R_-(0) : L^p(SM) \rightarrow \cap_{q < p} L^q(SM) \]
extends as a bounded operator and thus $\Pi : L^p(SM) \rightarrow \cap_{q < p} L^q(SM)$ is bounded. Of course, the same arguments show that $\Pi^e : L^p(SM_e) \rightarrow \cap_{q < p} L^q(SM_e)$ is bounded.

We extend $f$ by $0$ outside $SM$ to obtain a function on $SM_e$ (still denoted $f$). Now, we have for some $\varepsilon > 0$ small enough:

\[ \begin{split} \varepsilon \|If\|^q_{L^q(\partial_- SM, d\mu_\nu)} & = \int_{\partial_- SM} \varepsilon |If(x,v)|^q d\mu_\nu(x,v) \\
& = \int_{\partial_- SM} \int_0^\varepsilon |\tilde{I}^*If(\varphi_t(x,v))|^q dt d\mu_\nu(x,v) \\
& = \int_{SM} |\Pi^e f|^q \mathbf{1}_{A_\varepsilon}(x,v) d\mu(x,v)  \leq \|\Pi^e f\|^q_{L^q(SM_e)}, \end{split} \]
where $\tilde{I}^* u(\varphi_t(x,v)) = u(x,v)$, for $(x,v) \in \partial_- SM, t \in [0, \varepsilon]$, and
\[ A_\varepsilon = \left\{ \varphi_t(x,v) \in SM_e, (x,v) \in \partial_- SM, 0 \leq t \leq \varepsilon \right\} \]
Thus, using the boundedness of $\Pi^e$ and the fact that $f \equiv 0$ on $M_e \setminus M$, we get that $I : L^p(SM) \rightarrow \cap_{q < p} L^q(\partial_- SM, d\mu_\nu)$ is bounded and by a duality argument $I^* : L^p(\partial_- SM, d\mu_\nu) \rightarrow \cap_{q < p} L^q(SM)$ is bounded too.

\end{proof}

\subsubsection{Action on some Sobolev spaces}

Recall that $\pi_0 : SM \rightarrow M$ denotes the projection on the manifold. There exists a decomposition of the tangent space to the unit tangent bundle over $M$:
\[ T(SM) = \mathcal{H} \oplus \mathcal{V} \]
which is orthogonal for the Sasaki metric (see Section \ref{ssect:geo} for the case of a surface), where $\mathcal{V} = \ke d\pi_0$, $\mathcal{H} = \ke \mathcal{K}$ and $\mathcal{K}$ is the connection map, defined such that $\mathcal{K}(\zeta) \in T_{\pi_0(\zeta)}M$ is the only vector such that the local geodesic $t \mapsto \gamma(t) \in SM$ starting from $(\pi_0(\zeta),\mathcal{K}(\zeta))$ satisfies $\dot{\gamma}(0) = \zeta$ (see \cite{Paternain-99} for a reference). We define the dual spaces $\mathcal{H}^*$ and $\mathcal{V}^*$ such that $\mathcal{H}^*(\mathcal{H}) = 0, \mathcal{V}^*(\mathcal{V}) = 0$.

\begin{lemma}
\label{lem:wf}
Let $u \in \mathcal{C}^{-\infty}(M, \otimes^m_S T^*M)$. Then, $\text{WF}(\pi_m^*u) \subset\mathcal{V}^*$.
\end{lemma}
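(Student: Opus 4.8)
The plan is to reduce the statement to the standard behaviour of wavefront sets under pullback by a submersion. Since the claim is local and the wavefront set is computed in local charts, I would fix a coordinate chart on $M$ together with the coordinate frame $(dx^{i_1}\otimes\cdots\otimes dx^{i_m})$ of $\otimes^m_S T^*M$, so that $u$ is represented by finitely many scalar distributions $u_I \in \mathcal{C}^{-\infty}(M)$ indexed by multi-indices $I=(i_1,\dots,i_m)$. In the induced coordinates $(x,v)$ on $SM$ one then has $\pi_m^* u = \sum_I (u_I\circ\pi_0)\, b_I$, where $b_I(x,v)=v^{i_1}\cdots v^{i_m}$ is a smooth function on $SM$ (the restriction to the unit sphere bundle of a coordinate monomial on $TM$). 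Because $\text{WF}(aw)\subseteq\text{WF}(w)$ for smooth $a$ and $\text{WF}(w_1+w_2)\subseteq\text{WF}(w_1)\cup\text{WF}(w_2)$, it suffices to establish the scalar case $\text{WF}(\pi_0^*\psi)\subseteq\mathcal{V}^*$ for $\psi\in\mathcal{C}^{-\infty}(M)$.

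For the scalar case I would invoke Hörmander's pullback theorem. The projection $\pi_0:SM\to M$ is a submersion, so its transpose ${}^t d\pi_0(z):T^*_{\pi_0(z)}M\to T^*_z SM$ is injective and the set of normals of $\pi_0$ reduces to the zero section; hence $\pi_0^*\psi$ is always well defined and
\[
\text{WF}(\pi_0^*\psi)\subseteq \left\{\left(z,\,{}^t d\pi_0(z)\,\eta\right): \left(\pi_0(z),\eta\right)\in\text{WF}(\psi)\right\}.
\]
By construction each covector ${}^t d\pi_0(z)\,\eta=\eta\circ d\pi_0(z)$ annihilates $\ke d\pi_0(z)=\mathcal{V}_z$, hence lies in $\mathcal{V}^*(z)$ by the defining relation $\mathcal{V}^*(\mathcal{V})=0$. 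This yields $\text{WF}(\pi_0^*\psi)\subseteq\mathcal{V}^*$, and combined with the reduction above, $\text{WF}(\pi_m^*u)\subseteq\mathcal{V}^*$.

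There is essentially no serious obstacle here; the only points requiring a little care are that the coordinate reduction is chart-independent (the subbundle $\mathcal{V}^*$ is intrinsically defined, and the factors $b_I$ are genuinely smooth on $SM$, since restriction to the unit sphere bundle is smooth) and the identification of the image of ${}^t d\pi_0$ with the annihilator $\mathcal{V}^*$ of the vertical bundle. Both follow at once from the submersion property of $\pi_0$ and from $\mathcal{V}=\ke d\pi_0$. I would note in passing that the inclusion is sharp: $\mathcal{V}^*$ is exactly the conormal bundle of the fibres of $\pi_0$, which is precisely what is needed when this lemma is later used to locate the wavefront set of $\Pi$.
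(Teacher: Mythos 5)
Your proof is correct and takes essentially the same route as the paper: both reduce the case $m\geq 1$ to the scalar case by decomposing $u$ against a smooth local frame of $\otimes^m_S T^*M$ whose elements pull back to smooth functions on $SM$ (you use the coordinate frame $dx^{i_1}\otimes\cdots\otimes dx^{i_m}$ with scalar component distributions, while the paper uses a local orthonormal frame with a cutoff and order-zero operators $A_j$ — a cosmetic difference), and both conclude via H\"ormander's pullback theorem \cite[Theorem 8.2.4]{Hormander-90} applied to the submersion $\pi_0$, whose transpose has image equal to the annihilator $\mathcal{V}^*$ of $\ke d\pi_0$. There are no gaps.
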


\begin{proof}
The case $m=0$ is rather immediate since the set of normals of $\pi_0$ is empty and $d\pi_0 (\mathcal{V}) = \left\{0\right\}$ so, by \cite[Theorem 8.2.4]{Hormander-90}, we have:
\[ \text{WF}(\pi_0^* u) \subset \left\{(z, d\pi_0(z)^{T}\eta), (\pi_0(z),\eta) \in \text{WF}(u)\right\} \subset \mathcal{V}^* \]

As to the case $m \geq 1$, it actually boils down to the case $m=0$. Indeed, consider a point $x_0 \in M$ and a local smooth orthonormal basis $(e_1(x), ..., e_{N(m)}(x))$ of $\otimes^m_S T^*M$ in a neighborhood $V_{x_0}$ of $x_0$, where $N(m)$ denotes the rank of $\otimes^m_S T^*M$. Consider a smooth cutoff function $\chi$ such that $\chi \equiv 1$ in a neighborhood $W_{x_0} \subset V_{x_0}$ of $x_0$ and $\text{supp}(\chi) \subset V_{x_0}$. Any smooth section $\psi$ of $\otimes^m_S T^*M$ can be decomposed in $V_{x_0}$ as:
\[ \psi(x) = \sum_{j=1}^{N(m)} \langle \psi(x), e_j(x) \rangle_{g} e_j(x) \]
Thus:
\[ \pi_m^* (\chi\psi) = \sum_{j=1}^{N(m)} \pi_0^* \left(\langle \psi(x), \chi e_j(x) \rangle_{g}\right) \pi_m^* e_j = \sum_{j=1}^{N(m)} \pi_0^* \left(A_j \psi\right) \pi_m^* e_j, \]
where the $A_j : \mathcal{C}^\infty(M, \otimes^m_S T^*M) \rightarrow \mathcal{C}^\infty(M,\R)$ are pseudodifferential operators of order $0$ with support in $\text{supp}(\chi)$. This expression still holds for a distribution $u$. Note that $\pi_m^*e_j$ is a smooth function on $SM$, thus the wavefront is given by the $\pi_0^*(A_j \psi)$ and by our previous remark for $m=0$:
\[ WF(\pi_m^*(\chi u))\subset \mathcal{V}^* \]
\end{proof}

We define
\[ H^1_0(\partial_-SM,d\mu_\nu) := \left\{u \in H^1(\partial_- SM, d\mu_\nu), u|_{\partial_0 SM} = 0\right\} \]
Its dual for the natural $L^2$-scalar product given by the measure $d\mu_\nu$ is $H^{-1}(\partial_-SM, d\mu_\nu)$. Let us recall that given $u \in \mathcal{C}^{-\infty}(SM)$, its $H^s$-wavefront set is defined for $s \in \R$ by:
\[ \begin{split} \text{WF}_s(u)^{c} =  \left\{  (z,\xi) \in T^*(SM), \exists A \in \Psi^0 \text{ elliptic at } (z,\xi) \text{ such that}, Au \in H^s_{\text{loc}} \right\} \end{split}\] 
Here, $\Psi^0$ denotes the usual class of pseudodifferential operators of order $0$ (we refer to \cite{Lerner-lecture-notes} and \cite[Appendix E]{Dyatlov-Zworski-book-resonances} for further details). We say that a distribution $u$ is \textit{microlocally} $H^{s}$ at $(z,\xi) \in T^*(SM)$ (for some $s \in \R$) if $(z,\xi) \notin \text{WF}_s(u)$, and \textit{locally} $H^s$ at $z \in SM$ if it is microlocally $H^s$ at $(z,\xi)$ for any $\xi \in T^*_z(SM)$. Given $A \in \Psi^0$, we will denote by $\text{ell}(A) \subset T^*M\setminus\left\{0\right\}$ its region of ellipticity. Eventually, we will denote by $p : (x,\xi) \mapsto \langle\xi,X(x)\rangle$ the principal symbol of $\frac{1}{i}X$ and by $\Sigma:=p^{-1}(\left\{0\right\})$ its characteristic set. 

\begin{proposition}
\label{ref:prop}
Let $u \in H^{-1}_\text{comp}(M^\circ, \otimes^m_S T^*M^\circ)$. Then $\Pi \pi_m^* u \in H^{-1}(SM)$ and $I_m u \in H^{-1}(\partial_-SM, d\mu_\nu)$. The same result holds for $M_e$.
\end{proposition}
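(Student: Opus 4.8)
The plan is to establish the interior statement first and then read off the boundary statement from it. The very first step is to locate $\pi_m^* u$ in $H^{-1}(SM)$. Using the local decomposition carried out in the proof of Lemma \ref{lem:wf}, the tensorial case reduces to the scalar pullback $\pi_0^*$, and since $\pi_0 : SM \rightarrow M$ is a submersion its pullback is continuous $H^s_{\mathrm{loc}} \rightarrow H^s_{\mathrm{loc}}$ for every $s \in \R$ (in Fourier variables the additional fibre frequencies only improve matters when $s<0$). Hence $\pi_m^* u \in H^{-1}(SM)$, with compact support in $SM^\circ$ because $\mathrm{supp}(u)$ is compact in $M^\circ$. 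By Lemma \ref{lem:wf} we also know $\mathrm{WF}(\pi_m^* u) \subset \mathcal{V}^*$. Since $X$ is horizontal, $\mathcal{V}^* \subset \Sigma$; and because $(M,g)$ has no conjugate points the bundles $E_\pm$ are transverse to the vertical bundle $\mathcal{V}$, so that $\mathcal{V}^* \cap (E_+^* \cup E_-^*) = \{0\}$ off the zero section. This transversality is precisely what will make the resolvents applicable to the distribution $\pi_m^* u$.

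The core of the argument is to run $\pi_m^* u$ through $R_\pm(0)$. Here I would invoke the microlocal continuity of the resolvents established by Guillarmou \cite{Guillarmou-17-2}, itself built on the anisotropic Sobolev space technology of Faure--Sjöstrand \cite{Faure-Sjostrand-11} and Dyatlov--Zworski \cite{Dyatlov-Zworski-16}: because $\mathrm{WF}(\pi_m^* u)$ avoids the radial source/sink $E_\mp^*$ of $R_\pm(0)$, the forward and backward resolvents extend to $\pi_m^* u$ and preserve the base Sobolev order $-1$. Equivalently, setting $w := \Pi \pi_m^* u$, one has $Xw = 0$, so microlocal elliptic regularity for $\tfrac{1}{i}X$ shows $w$ is smooth microlocally off the characteristic set $\Sigma$, while propagation of singularities confines the remaining wavefront to the flow-invariant radial directions $E_\pm^*$. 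The radial-point estimates attached to the hyperbolic splitting then control $w$ at these directions at the level $H^{-1}$, and patching the elliptic region with the radial region gives $w \in H^{-1}(SM)$. Repeating the same argument verbatim on $M_e$ yields $\Pi^e \pi_m^* u \in H^{-1}(SM_e)$, and in particular $R_+^e(0)\pi_m^* u \in H^{-1}(SM_e)$.

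For the trace statement I would use the identity $I_m u = (R_+(0)\pi_m^* u)|_{\partial_- SM}$, valid because $I = r_{\partial_- SM} \circ R_+(0)$ on functions. Write $w = R_+(0)\pi_m^* u$. Since $\mathrm{supp}(u) \subset M^\circ$ is compact, $\pi_m^* u$ vanishes in a neighbourhood of $\partial SM$, so $Xw = -\pi_m^* u = 0$ there: $w$ is flow-invariant near $\partial_- SM$, and therefore $\mathrm{WF}(w) \subset \Sigma$ near the boundary. Strict convexity of $\partial M$ makes $X$ transverse to $\partial_- SM$ (away from the glancing set $\partial_0 SM$, whose contribution is tamed by the strict convexity and by the degeneration of the weight $|g_x(v,\nu)|$ in $d\mu_\nu$, as in \cite{Guillarmou-17-2}), so no nonzero covector annihilating $X$ is conormal to $\partial_- SM$, i.e. $\mathrm{WF}(w) \cap N^*(\partial_- SM) = \{0\}$. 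By Hörmander's restriction theorem \cite{Hormander-90} the trace $w|_{\partial_- SM}$ is thus a well-defined distribution. Finally, in flow-box coordinates where $X = \partial_{x_1}$ and $\partial_- SM = \{x_1 = 0\}$, the invariance $\partial_{x_1}w = 0$ shows $w$ is independent of $x_1$, and an elementary Fourier estimate then converts $w \in H^{-1}$ near the boundary into $w|_{\partial_- SM} \in H^{-1}(\partial_- SM, d\mu_\nu)$ with no loss of derivatives, which is the claim $I_m u \in H^{-1}(\partial_- SM, d\mu_\nu)$.

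I expect the genuine obstacle to be the resolvent continuity at base order exactly $-1$. Everything off $\Sigma$, and on the non-radial part of $\Sigma$, is routine elliptic regularity and propagation of singularities; the delicate point is the regularity of $\Pi \pi_m^* u$ precisely at the radial directions $E_\pm^*$, where one must match the threshold fixed by the expansion and contraction rates of the hyperbolic splitting against the Sobolev order $-1$. This is exactly where the hyperbolicity of $K$ (Proposition \ref{prop:hyp}) and the cited anisotropic-space machinery are indispensable, and verifying that the order $-1$ lies on the correct side of the threshold is the step I would treat most carefully.
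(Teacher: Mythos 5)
Your architecture is the right one and coincides with the paper's: $\text{WF}(\pi_m^*u)\subset\mathcal{V}^*$ via Lemma \ref{lem:wf}, well-definedness of $R_\pm(0)\pi_m^*u$ from $\mathcal{V}^*\cap E_\pm^*=\{0\}$ (the splitting $T(SM_e)=\R X\oplus\mathcal{V}\oplus E_\pm$ over $\Gamma_\pm$, \cite{Klingenberg-74}), elliptic regularity off $\Sigma$, propagation of singularities, radial estimates at $E_\pm^*$, and the boundary restriction via $\text{WF}\subset\Sigma$ being disjoint from $N^*(\partial_-SM)$; your flow-box averaging at the boundary is a correct, and in fact slightly more explicit, version of the paper's final step. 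But the core interior statement is invoked as a black box (``the resolvents extend and preserve the base Sobolev order $-1$'') that does not exist off the shelf in the cited works, and the two places where the argument could actually fail are precisely the ones you skip. First, working with $w=\Pi\pi_m^*u$ and asserting that ``propagation of singularities confines the remaining wavefront to $E_\pm^*$'' has no anchor on trapped orbits: a bicharacteristic emanating from $\mathcal{V}^*\cap\Sigma$ over a point of $K$ never reaches a region of known regularity, so propagation alone gives nothing on the flow-out sets $B_\pm$. The paper resolves this by treating $R_+(0)$ and $R_-(0)$ \emph{separately}: the one-sided wavefront bound on the Schwartz kernel of $R_-(0)$ (conormal to the diagonal, $\Omega_+$ propagating only forward in time, and $E_+^*\times E_-^*$) forces $R_-(0)\pi_m^*u$ to be smooth on the backward flow-out $B_-$, from which one propagates into $B_+$; this step needs $B_-\cap B_+=\emptyset$, which is exactly where the absence of conjugate points enters a second time (the footnote in the paper) — a use of the hypothesis your sketch misses entirely, since you only exploit it for $\mathcal{V}^*\cap E_\pm^*=\{0\}$. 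Second, the radial threshold that you explicitly defer (``the step I would treat most carefully'') is the crux of the proposition, and it is not a matter of matching $-1$ against expansion rates: by \cite[Lemmas 3.7 and 4.2]{Dyatlov-Guillarmou-16}, since $X$ is formally skew-adjoint (it preserves the Liouville measure), the sink estimate holds for \emph{every} $s<0$, so $s=-1$ is automatically admissible; one must also construct the microlocal cutoffs $A,B,B_1$ with Schwartz kernels supported in $SM^\circ\times SM^\circ$ to run the estimate on the open manifold. Without these two ingredients your proof has a hole exactly at its announced hardest point.

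A smaller but genuine error: ``since $X$ is horizontal, $\mathcal{V}^*\subset\Sigma$'' is false. With the paper's convention $\mathcal{V}^*(\mathcal{V})=0$, the contact form $\alpha$ lies in $\mathcal{V}^*$ yet satisfies $\alpha(X)=1$, so $\alpha\in\mathcal{V}^*\setminus\Sigma$. This is repaired for free — the part of $\mathcal{V}^*$ off $\Sigma$ is handled by ellipticity of $X$ — but it matters for the bookkeeping: the set whose flow-out must be controlled is $B_\pm$, generated by $\mathcal{V}^*\cap\Sigma$, not by all of $\mathcal{V}^*$.
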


The proof is based on classical propagation of singularities (for which we refer to \cite[Theorem 4.3.1]{Lerner-lecture-notes} for instance) and more recent propagation estimates with radial sources/sinks in open manifolds due to Dyatlov-Guillarmou \cite[Lemma 3.7]{Dyatlov-Guillarmou-16}.

\begin{proof}[Proof]

Since $\Pi = R_+(0) - R_-(0)$, we will actually prove that both $R_\pm(0) \pi_m^*$ satisfy the proposition. We will only deal with $R_-(0)\pi_m^*$ since the operator $R_+(0) \pi_m^*$ can be handled in the same fashion. Consider $u \in H^{-1}_\text{comp}(M^\circ, \otimes^m_S T^*M^\circ)$. By the previous lemma, $\pi_m^* u \in H^{-1}_{\text{comp}}(SM^\circ)$ and $\text{WF}(\pi_m^* u) \subset \mathcal{V}^*$. The wavefront set of the Schwartz kernel of $R_-(0)$ is described in \cite{Dyatlov-Guillarmou-16}:
\[ \text{WF}'(R_-(0)) \subset N^*\Delta(SM^\circ \times SM^\circ) \cup \Omega_+ \cup (E_+^* \times E_-^*), \]
where
\[  N^*\Delta(SM^\circ \times SM^\circ) = \left\{ (x,\xi,x,-\xi) \in T^*(SM^\circ \times SM^\circ) \right\}, \]
denotes the conormal to the diagonal and
\[ \Omega_+ = \left\{ (\varphi_{t}(z), (d\varphi_{t}(z))^{-\top}(\xi),z,-\xi) \in T^*(SM^\circ \times S M^\circ), t \geq 0,(z,\xi) \in \Sigma \right\}, \]
with $d\varphi_{t}(z)^{-\top}$ denoting the inverse transpose. Thus, by the rules of composition for the wavefront sets (see \cite[Chapter 8]{Hormander-90} for a reference) and since there are no conjugate points, $R_-(0)f$ is well-defined as a distribution, as long as $\text{WF}(f) \cap E_-^* = \emptyset$. This is the case for $\pi_m^* u$ because over $\Gamma_\pm$ the decomposition $T(SM_e) = \R X \oplus \mathcal{V} \oplus E_\pm$ holds (see \cite[Proposition 6]{Klingenberg-74}) and thus $\mathcal{V}^* \cap E^*_\pm = \left\{ 0 \right\}$.
Furthermore,
\be \label{eq:wf} \begin{split} \text{WF}(R_-(0)\pi_m^*u) \subset \mathcal{V}^* \cup B_+ \cup E_+^* \end{split} \ee
where
\[ B_+ := \left\{ (\varphi_{t}(z), (d\varphi_{t}(z))^{-\top}(\xi))  \in T^*(S M^\circ), t \geq 0, (z,\xi) \in   \mathcal{V}^* \cap  \Sigma \right\} \] 
is the forward propagation of $\mathcal{V}^* \cap \Sigma$ by the Hamiltonian flow in the characteristic set. Note that $XR_-(0)\pi_m^* u = -\pi_m^* u$ and by ellipticity of $X$ outside the characteristic set $\Sigma$, one has $\text{WF}_{-1}(R_-(0)\pi_m^*u) \cap \Sigma^{c} = \emptyset$, that is $R_-(0)\pi_m^* u$ is microlocally in $H^{-1}$ outside $\Sigma$.

Given a point $z \notin \Gamma_+$, we know that there exists a finite time $T > 0$ such that $\varphi_{-T}(z) \in \partial_- SM$. But since $u$ was taken with compact support in $M^\circ$, we know that there exists a whole neighborhood of $\partial_-SM$ where $R_-(0)\pi_m^* u$ vanishes (and thus is $H^{-1}$ locally). By classical propagation of singularity, since $XR_-(0)\pi_m^*u = -\pi_m^*u$ is $H^{-1}$ on $SM$, we deduce that $R_-(0)\pi_m^* u$ is locally $H^{-1}$ at $z$.

%\begin{figure}[h!]
%\begin{center}

%\label{fig:aniso}

%\includegraphics[scale=1]{propagation.eps} 
%\caption{A picture of the situation : here $\Phi_T(z,\xi) = (\varphi_{T}(z), (d\varphi_{T}(z))^{-T}(\xi))$.}

%\end{center}
%\end{figure}

The points left to study are the $z \in \Gamma_+$. Let us prove that $R_-(0)\pi_m^* u$ is microlocally $H^{-1}$ on $B_+$. Given $(z,\xi) \in B_+$, there exists by definition a finite time $T \geq 0$ such that $(\varphi_{-T}(z), (d\varphi_{-T}(z))^{-\top}(\xi)) \in B_-$ (where $B_-$ is the backward propagation of $\mathcal{V}^* \cap \Sigma$ by the Hamiltonian flow, defined analogously as $B_+$ but for strictly negative time; the absence of conjugate points implies that $B_-\cap B_+ = \emptyset$\footnote{Assume $B_-\cap B_+ \neq \emptyset$. Then, there exists $t_0 \geq 0, t_1 > 0, (z,\xi), (y,\eta) \in \mc{V}^* \cap \Sigma$, such that $(\varphi_{t_0}(z), (d\varphi_{t_0}(z))^{-\top}(\xi))=(\varphi_{-t_1}(y), (d\varphi_{-t_1}(y))^{-\top}(\eta))$, so $y = \varphi_{t}(z)$ (with $t:=t_0+t_1 > 0$), $\eta=(d\varphi_{t}(z))^{-\top}(\xi)$, the latter equality being contradicted by the absence of conjugate points.}).
But by (\ref{eq:wf}), $R_-(0)\pi_m^* u$ is microlocally in $H^{-1}$ on $B_-$ (it is smooth actually) and $XR_-(0)\pi_m^*u = - \pi_m^*u$ is in $H^{-1}$, thus it is in particular $H^{-1}$ along the trajectory $\left\{(\varphi_{-s}(z), (d\varphi_{-s}(z))^{-\top}(\xi)), s \in [0,T]\right\}$ so by classical propagation of singularities, $R_-(0)\pi_m^*u$ is microlocally $H^{-1}$ at $(z,\xi)$ (regularity propagates forward and backwards since the principal symbol is real).

As a consequence, $\text{WF}_{-1}(R_-(0)\pi_m^*u) \subset E_+^*$. To conclude, we will use the result of propagation of estimates for a radial sink as it is formulated in \cite[Lemma 3.7]{Dyatlov-Guillarmou-16}. We embed the outer manifold $M_e$ into $N$, a smooth closed manifold and extend smoothly the metric $g$ and the vector field $X$ (see \cite[Section 2]{Dyatlov-Guillarmou-16}). We extend $R_-(0)\pi_M^*u$ by $0$ outside $SM$. We consider $A, B, B_1 \in \Psi^0(SN)$ such that (see Figure \ref{fig:propagation}):

\begin{itemize}
\item $\text{WF}(A)$ is contained in a conic neighborhood of $E_u^*=E_+^*|_{K}$ and $A$ is elliptic on a (smaller) conic neighborhood of $E_u^*$,
\item $\text{ell}(B)$ contains a whole neighborhood of $\pi^{-1}(K)$ (larger than that chosen for $A$), except a conic vicinity of $E_+^*$, and $\text{WF}(B) \cap E_+^* = \emptyset$ (in other words $B$ is elliptic over a punctured neighborhood in the fibers over $K$),
\item $\text{ell}(B_1)$ is contained in $SM^\circ$ and contains $\text{WF}(A)$ and $\text{WF}(B)$.
\end{itemize}

\begin{figure}[h!]
\begin{center}

\includegraphics[scale=0.7]{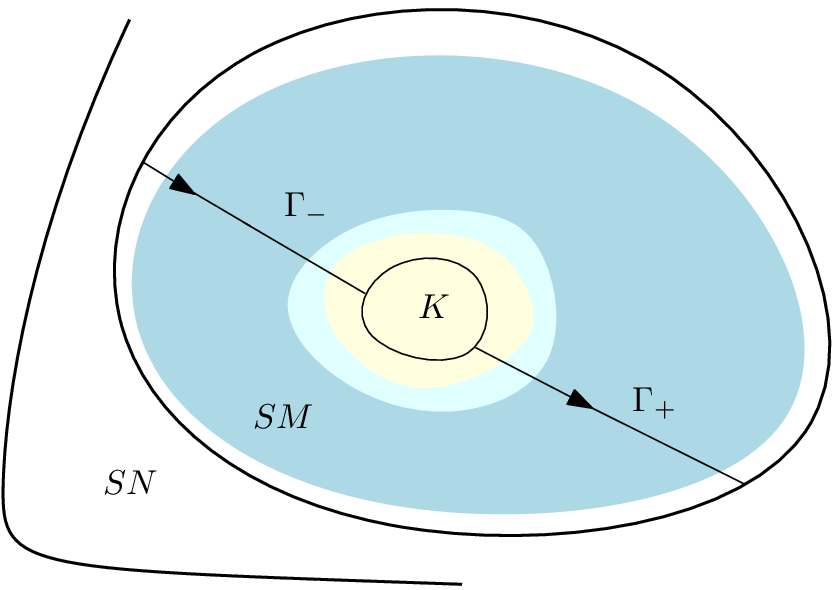} \hfill
\includegraphics[scale=0.7]{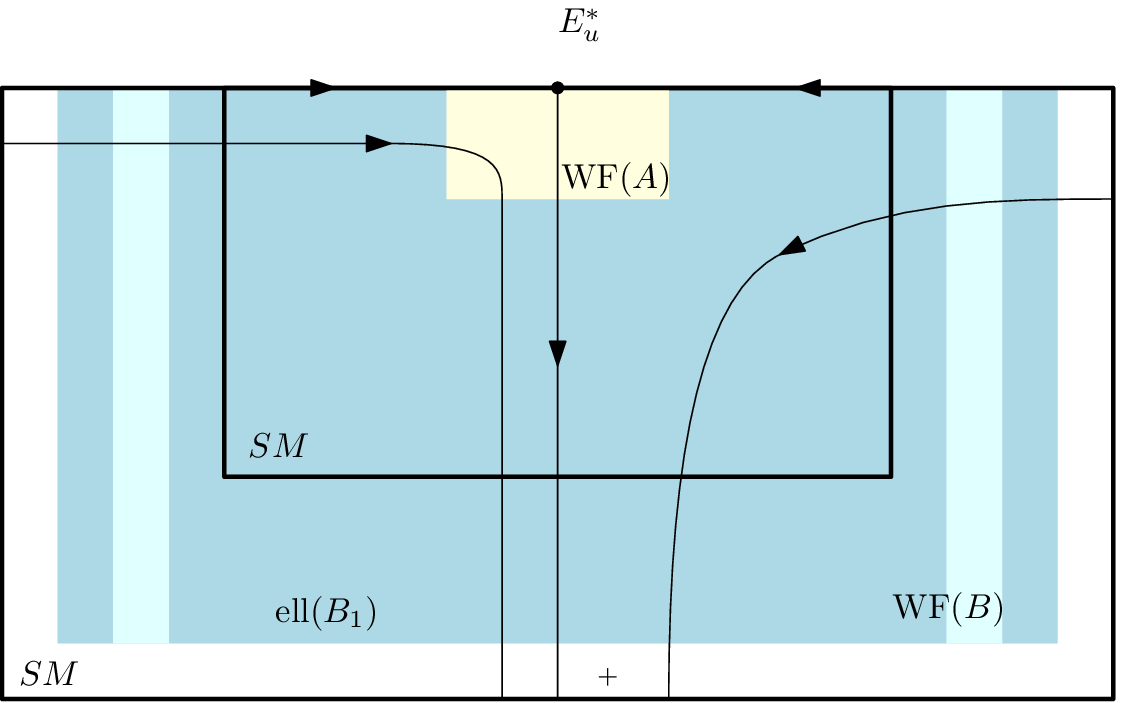} 
\caption{In yellow, light blue, darker blue: (resp.) $\text{WF}(A), \text{WF}(B), \text{ell}(B_1)$. Left: The projection of the previous sets on the base $SM$. Right: Vertical lines represent the dynamics in the physical space $SM$, horizontal lines represent the dynamics in the cotangent space $T^*(SM)$.}

\label{fig:propagation}

\end{center}
\end{figure}

Moreover, we take these operators so that they do not "see" the exterior manifold $SN$, in the sense that their Schwartz kernel is supported in $SM^\circ \times SM^\circ$.
%Another way to state this, is that there exists a smooth cutoff function $\chi \in \mathcal{C}^\infty(SN)$ such that $\chi^{-1}(\left\{1\right\}) \subset SM_e^\circ$, $\chi \equiv 0$ on $SN \setminus SM_e$ and for all $f \in \mathcal{C}^{-\infty}(SN)$, if $P\in \left\{A,B,B_1\right\}$, then $Pf = P\chi f$ and $Pf$ is supported in $SM_e^\circ$.
Actually, once one is able to construct three operators satisfying the three previous items, it is sufficient to truncate their Schwartz kernel so that they satisfy this condition of support. These operators satisfy \cite[Lemma 3.7]{Dyatlov-Guillarmou-16} where $L:=E_u^*$ is the sink. Indeed, if $(z,\xi) \in \text{WF}(A)$, then by \cite[Lemma 2.11]{Dyatlov-Guillarmou-16}:
\begin{itemize}
\item if $z \notin \Gamma_+$, then there exists a finite time $T \geq 0$ such that $\varphi_{-T}(z) \in \text{ell}(B)$ (in the past, the point physically escapes from a neighborhood of $K$ and falls in a region where $B$ is elliptic),
\item if $z \in \Gamma_+, \xi \notin E_+^*$, $d\varphi^{-\top}_{t}(\xi) \rightarrow_{t\rightarrow -\infty} E_-^*$ which is contained in $\text{ell}(B)$ (in the past, $z$ goes to $K$ while in phase space, the covector $\xi$ goes to $E_-^*$ and falls in a region of ellipticity of $B$),
\item if $z \in \Gamma_+, \xi \in E_+^*$, $(\varphi_t(z),d\varphi^{-\top}_t(\xi)) \rightarrow_{t \rightarrow -\infty} L=E_u^*$ (and these points stay in $\text{ell}(B_1)$).
\end{itemize}
Note that \cite[Lemma 3.7]{Dyatlov-Guillarmou-16} is satisfied for any $s < 0$ (thus in particular for $s=-1$) as mentioned in \cite[Lemma 4.2]{Dyatlov-Guillarmou-16} because $X$ is formally skew-adjoint. Moreover, by construction, $BR_-(0)\pi_m^* u$ is $H^{-1}$ because we already know that $R_-(0)\pi_m^* u$ is microlocally $H^{-1}$ away from $E_+^*$ and $\text{WF}(B) \cap E_+^* = \emptyset$. By \cite[Lemma 3.7]{Dyatlov-Guillarmou-16}, there exists a constant $C > 0$ (independent of $u$) and an integer $N \geq 2$ such that:
\[ \begin{split} \|A R_-(0) \pi_m^* u\|_{H^{-1}(SN)} & = \|A w \|_{H^{-1}(SN)} \\
& \leq C\left(\|Bw\|_{H^{-1}(SN)} + \|B_1Xw\|_{H^{-1}(SN)} + \|w\|_{H^{-N}(SN)}\|\right)  \\
& = C\left(\|BR_-(0) \pi_m^* u\|_{H^{-1}(SM)} + \|B_1 \pi_m^* u\|_{H^{-1}(SM)} + \|R_-(0) \pi_m^* u\|_{H^{-N}(SM)}\right) \end{split} \]
As a consequence, by the choice of $A$, $R_-(0)\pi_m^*u$ is microlocally $H^{-1}$ on $E_+^*$ in a neighborhood of $K$ and classical propagation of singularities implies that this holds on $SM$. Thus $R_-(0)\pi_m^* u \in H^{-1}$.

To prove the last part of the proposition, it is sufficient to establish that $\Pi \pi_m^* u \in H^{-1}(SM)$ restricts on the boundary $\partial_-SM$. The restriction makes sense as long as 
\[ \text{WF}(\Pi \pi_m^* u) \cap N^*(\partial_- SM) = \emptyset, \]
Remark that, since $u$ has compact support in $M^\circ$, $R_\pm(0) \pi_m^* u \equiv 0$ in a vicinity of $\partial_0 SM$, so there is no singular support in a vicinity of $\partial_0 SM$. Moreover, since $X\Pi \pi_m^* u = 0$, we know that $\text{WF}(\Pi \pi_M^* u) \subset \Sigma$. But if $\xi \in N^*(\partial_- SM)$ is not $0$, one has $\langle \xi, X \rangle \neq 0$ (since $X$ intersects transversally the boundary away from $\partial_0 SM$ by convexity) and thus $\xi \notin \Sigma$ by construction, so $\xi \notin \text{WF}(\Pi \pi_M^* u)$.

\end{proof}

\begin{remark}
\label{rem:reg}
Note that any other regularity $H^{-s}$ for some $s > 0$ could have been chosen instead of $H^{-1}$.
\end{remark}

\subsection{Some lemmas of surjectivity}

\label{ssect:surj}

The two following lemmas are stated by Paternain-Zhou \cite[Lemmas 4.2, 4.3]{Paternain-Zhou-16}. We detail the proof of the second lemma which morally follows that of Dairbekov-Uhlmann \cite[Lemma 2.2]{Dairbekov-Uhlmann-10}.

\begin{lemma}
\label{lemm:surj0}
Assume $I_m$ is $s$-injective. Then,
\[ P := r_M \Pi^e_m : H^{-1}_{\text{comp}}(M_e^\circ, \otimes^m_S T^*M_e^\circ) \rightarrow L^2_{\text{sol}}(M,\otimes^m_S T^*M) \]
is surjective.
\end{lemma}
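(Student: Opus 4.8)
The plan is to argue by Hilbert-space duality. Since the target $L^2_{\text{sol}}(M,\otimes^m_S T^*M)$ is a Hilbert space and $P$ is bounded, surjectivity of $P$ is equivalent to its adjoint $P^*$ being bounded below, and I would obtain this by combining two independent facts: (i) $P^*$ is injective, equivalently $\mathrm{Ran}(P)$ is dense — this is where the hypothesis of $s$-injectivity enters; and (ii) $P^*$ has closed range and finite-dimensional kernel — this follows from the ellipticity of the normal operator. First I would identify the adjoint. Writing $\Pi^e=R^e_+(0)-R^e_-(0)$, a direct computation using Santaló's formula (Lemma \ref{lemm:sant}, legitimate since $\mu(K)=0$ by Proposition \ref{prop:hyp}) together with the invariance of the Liouville measure gives $(R^e_+(0))^*=-R^e_-(0)$, so $\Pi^e$ is self-adjoint and $\langle\Pi^e g,g\rangle=\|I^e g\|^2_{L^2(\partial_-SM_e,d\mu_\nu)}$; since $\pi_m^*$ and ${\pi_m}_*$ are mutually adjoint, $\Pi^e_m={\pi_m}_*\Pi^e\pi_m^*$ is self-adjoint as well, and moreover its range is solenoidal because $I^e_m$ annihilates potential tensors. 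Denoting by $E_0:L^2(M)\to L^2(M_e)$ the extension by zero (well defined into $H^{-1}_{\text{comp}}(M_e^\circ)$ since $M\Subset M_e^\circ$), one gets $(r_M)^*=E_0$, hence $P^* f=\Pi^e_m E_0 f$, which lies in $H^1_{\text{loc}}$ by the order $-1$ mapping property of $\Pi^e_m$.

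For the density step, I would take $f\in L^2_{\text{sol}}(M)$ orthogonal to $\mathrm{Ran}(P)$, i.e.\ $\langle\Pi^e_m u,E_0 f\rangle=0$ for all $u\in H^{-1}_{\text{comp}}(M_e^\circ)$. By self-adjointness this reads $\langle u,\Pi^e_m E_0 f\rangle=0$ for all such $u$, so $\Pi^e_m E_0 f=0$ on $M_e^\circ$; pairing with $E_0 f$ and using positivity gives $\|I^e_m E_0 f\|^2=0$, that is $I^e_m(E_0 f)=0$. Here is the crucial observation (the referee's point noted in the acknowledgements): since $f$ is supported in $M$ and $\partial M$ is strictly convex with no conjugate points, every $M_e$-geodesic meets $M$ in a single connected arc which is a genuine $M$-geodesic, so $I^e_m(E_0 f)$ evaluated on the directions entering $M$ coincides with $I_m f$; thus $I^e_m(E_0 f)=0$ forces $I_m f=0$ using only the geometry of $M$. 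Then $\Pi_m f=I_m^*I_m f=0$, and by the ellipticity of $\Pi_m$ on solenoidal tensors established by Guillarmou \cite{Guillarmou-17-2} elliptic regularity promotes the solenoidal $L^2$ tensor $f$ to a smooth one, whence $s$-injectivity of $I_m$ yields $f=0$. This shows $P^*$ is injective.

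For the closed-range step I would use that $\Pi^e_m$ is an elliptic pseudodifferential operator of order $-1$ on solenoidal tensors over $M_e$ (after the standard solenoidal gauge of \cite{Sharafutdinov-94,Guillarmou-17-2}), which provides an elliptic estimate of the form $\|\Pi^e_m g\|_{H^1(M_e^\circ)}\geq c\|g^s\|_{L^2}-C\|g\|_{H^{-N}}$, where $g^s$ is the $M_e$-solenoidal part. Applying this with $g=E_0 f$ and comparing $(E_0 f)^s$ with $f$, one arrives at $\|P^* f\|_{H^1}\geq c\|f\|_{L^2(M)}-C\|f\|_{H^{-N}(M)}$ with a compact remainder. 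This makes $P^*$ semi-Fredholm with closed range; combined with the injectivity from the previous paragraph, $P^*$ is bounded below, and therefore $P$ is surjective.

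The hard part will be this last step together with the gauge bookkeeping it hides: the zero-extension $E_0 f$ is \emph{not} solenoidal on $M_e$ (its divergence jumps across $\partial M$), so the elliptic estimate a priori controls only the $M_e$-solenoidal part $(E_0 f)^s$, and one must argue that the complementary potential part cannot dominate in order to recover $\|f\|_{L^2(M)}$ before invoking the Fredholm dichotomy. A secondary technical nuisance is the borderline exponent $p=2$: Proposition \ref{prop:pi} only gives $I^e:L^2\to\cap_{q<2}L^q$, so the $L^2$ identity $\langle\Pi^e_m E_0 f,E_0 f\rangle=\|I^e_m E_0 f\|^2$ needs justification. I would circumvent this either by first exploiting the microlocal regularity of Proposition \ref{ref:prop} — which upon $\Pi^e_m E_0 f=0$ makes $E_0 f$ regular enough for the pairing to be valid — or by approximating $f$ within $L^2_{\text{sol}}(M)$ by smoother solenoidal tensors and passing to the limit.
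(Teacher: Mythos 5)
Your overall skeleton — surjectivity of $P$ via (i) injectivity of the adjoint, where the $s$-injectivity hypothesis enters, and (ii) a closed-range/Fredholm property coming from the ellipticity of $\Pi^e_m$ on solenoidal tensors — is indeed the structure of the paper's argument (carried out in detail for Lemma \ref{lemm:surj1}, following \cite[Lemmas 4.2--4.3]{Paternain-Zhou-16}). But both of your load-bearing steps have genuine gaps, and they are exactly the points the paper's proof is engineered to avoid. In step (i), after reaching $I_m f=0$ you assert that ellipticity ``promotes the solenoidal $L^2$ tensor $f$ to a smooth one'' and then invoke $s$-injectivity. This fails at the boundary: since $E_0f$ is not solenoidal on $M_e$ (as you note, $D^*E_0f$ is a layer on $\partial M$), the parametrix identity $Q\Pi^e_m=\mathrm{id}+DSD^*+R$ applied to $E_0f$ gives $E_0f=-DSD^*E_0f-RE_0f$, whose first term is singular along $\partial M$; one obtains smoothness of $f$ only in $M^\circ$, not up to $\partial M$, whereas the hypothesis is injectivity on $\mathcal{C}^\infty_{\text{sol}}(M)$, i.e.\ on tensors smooth up to the boundary. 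The paper never shows $f$ smooth. Instead it decomposes $E_0f=q+Dp_0$ with $p_0=\Delta^{-1}D^*E_0f$, proves that $q$ is smooth on all of $M_e$, and then (the referee's argument) replaces $p_0$ by a smooth $p_1$ with $p_1=p_0$ and $Dp_1=-q$ on $M_e\setminus M$, so that $Dp_1+q$ is a \emph{smooth} tensor supported in $M$ with vanishing transform; $s$-injectivity is applied to that corrected tensor, unique continuation then gives $E_0f=Dp$ with $p=0$ outside $M$ and $p$ smooth, and one concludes $\|f\|^2=\langle f,Dp\rangle=0$ using only that $f$ annihilates potentials. This routing also disposes of your borderline-exponent worry: the quadratic pairing $\langle\Pi^e_m g,g\rangle=\|I^e_m g\|^2$ is only ever used on the smooth compactly supported tensor $Dp_1+q$, never on the merely-$L^2$ tensor $E_0f$, for which Proposition \ref{prop:im} (requiring $p>2$) indeed does not justify it; neither of your two proposed patches is worked out, and the approximation route would have to preserve the constraint $\Pi^e_m E_0f=0$ in the limit, which is not addressed.

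The closed-range step is the second genuine gap, and you correctly identify it yourself as ``the hard part'': your elliptic estimate controls only the $M_e$-solenoidal part $(E_0f)^s$, and the potential part of $E_0f$ is \emph{not} negligible — $Dp_0$ is driven by the normal trace of $f$ on $\partial M$, which is of the same strength as $f\in L^2_{\text{sol}}(M)$ and is not compact relative to it, so no semi-Fredholm estimate for $P^*$ follows from what you write. The missing idea is to abandon the zero-extension on the Fredholm side altogether and use the solenoidal extension operator $E$ of \cite[Proposition 3.4]{Paternain-Zhou-16}, mapping solenoidal tensors on $M$ to compactly supported solenoidal tensors on $M_e^\circ$ (this is precisely why the paper requires the extended metric to have no nontrivial Killing tensors, a generic condition). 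Feeding $P$ such extensions and using the parametrix \eqref{eq:structure} yields $PQE=\mathrm{id}_M+r_MRE$ with $R$ smoothing, hence a Fredholm operator, giving closed range and finite codimension with no gauge comparison at all; the zero-extension $E_0$ appears in the paper only on the dual side, where it is harmless. (A minor further point: your reduction ``$P$ surjective iff $P^*$ bounded below'' presumes a Banach domain, while $H^{-1}_{\text{comp}}(M_e^\circ)$ is an inductive limit; this is easily repaired by fixing a compact $K$ with $M\subset K^\circ\subset K\subset M_e^\circ$, but it should be said.)
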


\begin{lemma}
\label{lemm:surj1}
Assume $I_m$ is $s$-injective. Then 
\[ P : \mathcal{C}^\infty_{\text{comp}}(M_e^\circ, \otimes^m_S T^* M_e^\circ) \rightarrow \mathcal{C}^\infty_{\text{sol}}(M, \otimes^m_S T^* M)\]
is surjective.
\end{lemma}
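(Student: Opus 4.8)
The plan is to prove surjectivity by duality, upgrading the $L^2$-statement of Lemma \ref{lemm:surj0} to the smooth category by exploiting the ellipticity of the normal operator. Write $N := \Pi^e_m = {\pi_m}_*^e \Pi^e \pi_m^{*e} = I_m^{e*}I_m^e$, so that $P = r_M N$ with $r_M$ the restriction to $M$. Three structural facts will be needed. First, since $\pi_m^{*e}D^e = X\pi_{m-1}^{*e}$ and $X\Pi^e = 0$, the operator $N$ annihilates potential tensors; moreover the identity $D^{e*}{\pi_m}_*^e = -{\pi_{m-1}}_*^e X$ together with $X\Pi^e = 0$ shows that $N$ takes values in $\ker D^{e*}$, and since the restriction to $M^\circ$ of a tensor in $\ker D^{e*}$ still lies in $\ker D^*$, the map $P$ genuinely lands in the solenoidal tensors. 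Second, by the microlocal analysis underlying Proposition \ref{ref:prop} (and the results of Guillarmou on which it rests), $N$ is a pseudodifferential operator of order $-1$ on $M_e^\circ$ that is elliptic on solenoidal tensors; in particular it is pseudolocal, so $P$ is continuous between the relevant $\mathcal{C}^\infty$ spaces. Third, $N$ is formally self-adjoint and nonnegative, with $\langle N w, w\rangle = \|I^e_m w\|^2_{L^2(\partial_- SM_e,\, d\mu_\nu)}$ for admissible $w$.

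By the closed range theorem, the surjectivity of $P : \mathcal{C}^\infty_{\text{comp}}(M_e^\circ) \to \mathcal{C}^\infty_{\text{sol}}(M)$ reduces to the injectivity of the transpose $P^t$ together with the closedness of the range of $P$. For the transpose, suppose $g$ is a solenoidal current on $M$ with $P^t g = 0$; unwinding the definitions, this means $N(e_M g) = 0$ on \emph{all} of $M_e^\circ$, where $e_M$ denotes extension by zero, the transpose of $r_M$. Since $e_M g$ is supported in $\overline{M} \subset M_e^\circ$, pairing $N(e_M g)=0$ against $e_M g$ yields $\|I^e_m(e_M g)\|^2 = 0$. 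Because $\partial M$ is strictly convex and $M$ has no conjugate points, every maximal geodesic of $M_e$ meets $M$ along a single geodesic segment of $M$, so $I^e_m(e_M g) = 0$ is equivalent to $I_m(g) = 0$; the $s$-injectivity hypothesis then forces the solenoidal part of $g$ to vanish, i.e. $g$ is potential, and being also solenoidal it must be $0$. At the distributional level this last step uses the pseudolocal, elliptic character of $N$ to reduce $g$ to the smooth case before invoking $s$-injectivity. This proves $P^t$ injective.

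It remains to establish that $P$ has closed range, which is where the main difficulty lies and where the ellipticity of $N$ is used quantitatively. The elliptic estimate for $N$ on solenoidal tensors produces, modulo the infinite-dimensional gauge kernel of potentials, a G{\aa}rding-type a priori inequality controlling the solenoidal part of the input by $N$ of it in one higher Sobolev order; uniformly across the Sobolev scale this gives that $P$ is a topological homomorphism, hence has closed range, and the preimage it produces is automatically smooth and compactly supported in $M_e^\circ$ because it is constructed inside $\mathcal{C}^\infty_{\text{comp}}(M_e^\circ)$. The delicate points to handle carefully are: (i) the functional-analytic framework, since $\mathcal{C}^\infty_{\text{comp}}(M_e^\circ)$ is an $LF$-space rather than a Fréchet space, so one must invoke the appropriate surjectivity criterion, namely injective transpose with weak-$*$ closed range; (ii) the interplay between the restriction $r_M$, the solenoidal gauge on $M$ versus on $M_e$, and the elliptic estimates near $\partial M$; and (iii) the passage of $s$-injectivity from smooth solenoidal tensors to the solenoidal currents appearing in the dual. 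Overall this follows the scheme of Dairbekov-Uhlmann \cite[Lemma 2.2]{Dairbekov-Uhlmann-10}.
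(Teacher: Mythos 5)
Your overall scheme --- duality plus ellipticity of the normal operator $N=\Pi^e_m$, in the style of Dairbekov--Uhlmann --- is the same as the paper's, but both of your pillars have genuine gaps, and they are precisely the two places where the paper does real work. First, the transpose-injectivity step. You pair $N(E_0 g)=0$ against $E_0 g$ and invoke $\langle Nw,w\rangle=\|I^e_m w\|^2$; but $g$ is only a distribution of some finite negative order supported in $M$, so $I^e_m(E_0 g)$ lies at best in $H^{-1}(\partial_-SM_e,d\mu_\nu)$ and the quadratic identity is simply undefined at this regularity. Nor can you "reduce $g$ to the smooth case" by ellipticity of $N$ on solenoidal tensors: $E_0 g$ is \emph{not} solenoidal on $M_e$, since $D^*E_0 g$ picks up a layer supported on $\partial M$, and this boundary singularity is exactly the crux of the problem. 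The paper handles it by decomposing $E_0 f = q + Dp_0$ with $p_0=\Delta^{-1}D^*E_0 f$, showing $q$ is smooth on all of $M_e$ (outside $M$ because $q=-Dp_0$ there, inside by ellipticity of $\Pi^e_m$ on $\ker D^*$), then correcting by a smooth $p_1$ with $p_1=p_0$ and $Dp_1=-q$ on $M_e\setminus M$, so that $Dp_1+q$ is smooth and supported in $M$ --- this is what allows s-injectivity of $I_m$ on $M$ (rather than on $M_e$) to be applied, yielding $Dp_1+q=Dp_2$ and hence $E_0f=Dp$; unique continuation kills $p$ on $M_e\setminus M$, a left parametrix $Q\Pi^e_m=\mathrm{id}+DSD^*+R$ upgrades $Dp$ to smooth, and only then does one conclude $\|f\|^2=\langle f,Dp\rangle=0$ because $f$ annihilates potentials with $p|_{\partial M}=0$. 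Your shortcut "$g$ is potential and solenoidal, hence $0$" is not the right conclusion at the distributional level: elements of the dual are distributions annihilating potential tensors, and the final pairing requires the smoothness of $Dp$ and the boundary condition on $p$, not an abstract orthogonality.

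Second, the closed-range step. Your appeal to "a G{\aa}rding-type a priori inequality, uniformly across the Sobolev scale" making $P$ a topological homomorphism is an assertion rather than an argument, particularly in the LF-space setting and with the restriction $r_M$ and the mismatch between the solenoidal gauges on $M$ and on $M_e$ in the way --- difficulties you flag under (i) and (ii) but do not resolve. The paper's mechanism is concrete and is the missing idea: take the right parametrix $\Pi^e_m Q=\mathrm{id}+DSD^*+R$ on $M_e^\circ$ together with the Paternain--Zhou extension operator $E$, which maps $\mathcal{C}^\infty_{\text{sol}}(M,\otimes^m_S T^*M)$ into compactly supported \emph{solenoidal} tensors on $M_e^\circ$ (available precisely because the extended metric is chosen generically without nontrivial Killing tensors). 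Then $PQE=\mathrm{id}_M+r_MRE$ with $r_MRE$ smoothing, hence compact, so $PQE$ is Fredholm on $H^N_{\text{sol}}(M)$ and has closed range of finite codimension in $\mathcal{C}^\infty_{\text{sol}}(M)$; since the range of $P$ is squeezed between the range of $PQE$ and $\mathcal{C}^\infty_{\text{sol}}(M)$, it too is closed of finite codimension, and surjectivity then follows from injectivity of $P^*$ alone. Without $E$ (or a substitute construction), your closed-range claim does not get off the ground. In short: right strategy, but the two substantive steps are exactly the ones left unproven.
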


Let $r_M$ denote the operator of restriction to the manifold $M$ and $E_0$ the operator of extension by $0$ outside $M$. Note that if $u \in H^{s}_{\text{sol}}(M, \otimes^m_S T^*M)$ (for some $s < 1/2$), then $E_0 u \in H^{s}_{\text{comp}}(M_e^\circ, \otimes^m_S T^*M_e^\circ)$ is not necessarily solenoidal as $D^* E_0 u$ may have some support in $\partial M$. Let $E : H^N_{\text{sol}}(M, \otimes^m_S T^* M) \rightarrow L^2_{\text{sol, comp}}(M^\circ_e, \otimes^m_S T^* M^\circ_e)$ be the operator of extension of \cite[Proposition 3.4]{Paternain-Zhou-16}, where $N \geq 2$ is an integer and $E(\mc{C}^\infty_{\text{sol}}(M, \otimes^m_S T^* M)) \subset \mc{C}^\infty_{\text{sol, comp}}(M^\circ_e, \otimes^m_S T^* M^\circ_e)$ (this is made possible by the absence of non-trivial Killing tensor fields). For the sake of simplicity, we will write $\mc{C}^\infty_{\text{sol}}(M)$ instead of $\mc{C}^\infty_{\text{sol}}(M, \otimes^m_S T^* M)$ in the proof.

\begin{proof}[Proof of Lemma \ref{lemm:surj1}]
We first prove that $P$ has closed range and finite codimension. By \cite[Proposition 5.9]{Guillarmou-17-2}, we know that $\Pi^e_m$ is elliptic of order $-1$ on $\ker D^*$ in the sense that there exists $Q, S, R$, pseudo-differential operators on $M_e^\circ$ of respective order $1,-2,-\infty$ such that
\be \label{eq:structure} \Pi^e_m  Q  = \text{id}_{M^\circ_e} + D S D^* + R, \ee
Note that we can always assume that $Q$ is properly supported in $M^\circ_e$ since any pseudodifferential operator can be splitted as the sum of a properly supported $\Psi$DO and a smooth $\Psi$DO (see \cite[Proposition 18.1.22]{Hormander-90}). We stress the fact that these operators (defined on $M^\circ_e$) will be applied to functions with compact support in $M^\circ_e$. As a consequence, we have for $f \in \mc{C}^\infty_{\text{sol}}(M)$ that
\[ P Q E f =  f +  r_M REf\]
Since $R$ is of order $- \infty$ (it is smoothing), it is compact on $H^{N}_{\text{sol}}(M)$ and so is $r_M RE$ (for $N \geq 0$). Thus, $A:=\text{id}_M + r_M RE=PQE : H^{N}_{\text{sol}}(M) \rightarrow H^{N}_{\text{sol}}(M)$ has closed range and finite codimension (it is Fredholm). This implies that $A : \mc{C}^\infty_{\text{sol}}(M) \rightarrow \mc{C}^\infty_{\text{sol}}(M)$ has closed range and finite codimension.

%(((Indeed, for the first statement, if $Au_n \rightarrow f$ in $\mc{C}^\infty_{\text{sol}}(M)$, then for any $N \geq 0$, one has the estimate $\|u\|_{H^N} \leq C_N(\|Au_n\|_{H^N}+\|r_MRE\|_{H^N})$ for all $u \in H^{N}_{\text{sol}}(M)$. Since, $r_MRE$ is compact on $H^N$, one has that $(u_n)_{n \in \N}$ is a Cauchy sequence in $H^N$ which converges to an element $u \in H^N$ and this element is independent of $N$, so $u \in \mc{C}^\infty_{\text{sol}}(M)$. Moreover, $Au = f$ in $H^N$ for any $N$, so $f \in \mc{C}^\infty_{\text{sol}}(M)$ too.

%As to the second statement, one has to prove that $\mc{C}^\infty_{\text{sol}}(M)/A(\mc{C}^\infty_{\text{sol}}(M))$ is finite dimensional, which amounts to proving that its topological dual is finite dimensional. But the latter is isomorphic to the annihilator of $A(\mc{C}^\infty_{\text{sol}}(M))$ (in $\mc{C}^\infty_{\text{sol}}(M)$). Now, assume $f$ is a continuous linear functional on $\mc{C}^\infty_{\text{sol}}(M)$ which vanishes on $A(\mc{C}^\infty_{\text{sol}}(M))$. We have that $f$ is a continuous functional on $H^{N}_{\text{sol}}(M)$ for some $N$ large enough. Since $A(\mc{C}^\infty_{\text{sol}}(M))$ is dense in the closed subspace $A(H^N_{\text{sol}}(M))$ of $(H^N_{\text{sol}}(M),\|\cdot\|_{H^N})$, this implies that $f$ is the null functional on $A(H^N_{\text{sol}}(M))$ and the vector space of such continuous functionals is finite-dimensional.))) 

The inclusion relation
\[ \begin{split}  P Q E (\mc{C}^\infty_{\text{sol}}(M, \otimes^m_S T^* M)) & \subset P (\mc{C}^\infty_{\text{comp}}(M_e^\circ, \otimes^m_S T^* M_e^\circ)) \\ & \subset \mc{C}^\infty_{\text{sol}}(M, \otimes^m_S T^* M), \end{split} \]
proves that the intermediate space is closed with finite codimension in $\mc{C}^\infty_{\text{sol}}(M, \otimes^m_S T^* M)$. It is now sufficient to prove that $P^* : (\mathcal{C}^\infty_{\text{sol}}(M, \otimes^m_S T^* M))^* \rightarrow  (\mathcal{C}^\infty_{\text{comp}}(M_e^\circ, \otimes^m_S T^* M_e^\circ))^*$ is injective.

%(((Indeed, assume it is the case and $P$ is not surjective. Then, one could write $\mc{C}^\infty_{\text{sol}}(M) = P(\mathcal{C}^\infty_{\text{comp}}(M_e^\circ)) \oplus L$, for some finite dimensional $L$ spanned by $\varphi_1, ..., \varphi_N$. But then, the functional $f$ defined by $f(\varphi_1) = 1, f(\varphi_j)=0$ (with $j\neq 1$), $f(P(\mathcal{C}^\infty_{\text{comp}}(M_e^\circ)))=0$ is continuous (its kernel is closed), and by construction $\langle f , Pu \rangle = \langle P^* f , u \rangle = 0$ for all $u \in \mathcal{C}^\infty_{\text{comp}}(M_e^\circ, \otimes^m_S T^* M_e^\circ)$, that is $P^*f = 0$ and $f=0$ by injectivity of $P^*$.)))

As mentioned in (\ref{eq:decomp}), there is a natural decomposition of tensors into $\mc{C}^\infty(M) = \mc{C}^\infty_{\text{sol}}(M) \oplus \mc{C}^\infty_{\text{pot}}(M)$ which is orthogonal for the $L^2$-scalar product. Any continuous functional on $\mc{C}^\infty_{\text{sol}}(M)$ extends as a continuous functional on $\mc{C}^\infty(M)$ which vanishes on $\mc{C}^\infty_{\text{pot}}(M)$ (and vice-versa). In other words, there is a canonical identification of the dual of $\mathcal{C}^\infty_{\text{sol}}(M, \otimes^m_S T^* M)$ with the sub-space of distributions 
\[ \mc{C}^{-\infty}_{\text{sol, 0}}(M^\circ_e):= \left\{ u \in \mc{C}^{-\infty}_{\text{comp}}(M^\circ_e), \text{supp }(u) \subset M, \text{ and } \forall f \in \mc{C}^{\infty}_{\text{pot}}(M), \langle u, \bar{E}f \rangle = 0 \right\},  \]
where $\bar{E}f$ is any smooth extension with compact support of $f$.

Assume that $P^*f = 0$ for some continuous functional $f$ on $\mc{C}^\infty_{\text{sol}}(M)$, that is $\langle f,Pu \rangle = 0 = \langle E_0f,\Pi^e_m u \rangle$, for all $u \in \mathcal{C}^\infty_{\text{comp}}(M_e^\circ)$. Here $E_0 f \in \mc{C}^{-\infty}_{\text{sol, 0}}(M^\circ_e)$ is the distribution on the exterior manifold identified with $f$. One has $E_0f \in H^{-N}_{\text{comp}}(M_e^\circ)$ for some $N$ large enough which gives that $\langle \Pi^e_m E_0 f , u \rangle = 0$, for all $u \in \mathcal{C}^\infty_{\text{comp}}(M_e^\circ)$, that is $\Pi^e_m E_0 f = 0$.

%Since $E_0f$ has compact support within $M^\circ_e$, $I^e_m E_0f=0$ is well-defined by Proposition \ref{ref:prop} and Remark \ref{rem:reg}.

We can still make sense of the decomposition $E_0f = q +Dp_0$, where $p_0:=\Delta^{-1}D^*E_0f \in H^{-N+1}(M_e,\otimes^{m-1}_S T^* M_e)$ (with $\Delta:=D^*D$ the Dirichlet Laplacian for $m$-tensors on $M_e$, see \cite{Dairbekov-Sharafutdinov-10}) and $q:=E_0f-Dp_0 \in H^{-N}_{\text{sol}}(M_e,\otimes^{m}_S T^* M_e)$ (in the sense that $D^*q = 0$ in the sense of distributions). One has $\Pi^e_m(E_0f-Dp_0)=\Pi^e_m(q)=0$.
% There is a slight abuse due to the fact that $q$ does not have compact support. But $q$ is defined by $q:= E_0f -Dp$ and $\Pi^e_m(E_0f), \Pi^e_m(Dp)$ are both well-defined and equal to $0$.
By ellipticity of $\Delta$, $p_0$ has singular support contained in $\partial M$ (and the same holds for $Dp_0$). Moreover, from $q=-Dp_0$ on $M_e \setminus M$, we see that $q$ is smooth on $M_e \setminus M$ and since it is solenoidal on $M_e$ and in the kernel of $\Pi^e_m$, it is smooth on $M_e^\circ$ (this stems from the ellipticity of $\Pi^e_m$ (\ref{eq:structure})), so $q$ is smooth on $M_e$.

Since\footnote{The argument given in this paragraph was communicated to us by one of the referees.} $Dp_0=-q$ on $M_e\setminus M$ and $q$ is smooth on $M_e$, one can find a smooth tensor $p_1$ defined on $M_e$ such that $p_1=p_0$ and $Dp_1=-q$ on $M_e\setminus M$. Then $Dp_1+q$ is smooth, supported in $M$ and $\Pi_m(Dp_1+q)=0$. By s-injectivity of the X-ray transform, we obtain $Dp_1+q=Dp_2$ on $M$ for some smooth tensor $p_2$ supported in $M$ such that $p_2|_{\partial M} = 0$ (and all its derivatives vanish on the boundary since $Dp_1+q$ vanish to infinite order on $\partial M$). Since $Dp_1+q=0$ on $M_e \setminus M$, we get $Dp_1+q=DE_0p_2$ on $M_e$ so $E_0f = q+Dp_0=D(p_0+E_0p_2-p_1)=Dp$, where $p := p_0+E_0p_2-p_1$.

We have $E_0f = Dp$ and $E_0f = 0$ on $M_e \setminus M$, $p|_{\partial M_e} = 0$. By unique continuation, we obtain that $p = 0$ in $M_e \setminus M$. Now, by ellipticity, one can also find (other) pseudo-differential operators $Q, S, R$ on $M_e^\circ$ of respective order $1,-2,-\infty$, such that:
\[  Q \Pi^e_m = \text{id}_{M^\circ_e} +  D S D^* + R, \]
where $S$ is a parametrix of $D^*D$. Since $E_0f = Dp$ has compact support in $M^\circ_e$, we obtain:
\[ \begin{split} Q \Pi^e_m E_0 f = 0  = Q \Pi^e_m Dp  = Dp + D S D^* Dp + Rp  = 2Dp + \text{ smooth terms} \end{split} \]
This implies that $E_0f = Dp$ is smooth on $M_e$ (and actually $p$ is smooth by ellipticity of $D$). Therefore:
\[ \langle f, f \rangle_{L^2(M)}  = \langle f , Dp \rangle = 0, \]
where the equality holds because $p|_{\partial M} = 0$ and, by assumption, $f$ vanishes on such potential tensors. Thus $f=0$ and $P$ is surjective.
\end{proof}

\section{Proof of the equivalence theorem}

We can now complete the proof of Theorem \ref{th2}. 

\begin{proof}[Proof of Theorem \ref{th2}] 

$(1) \implies (2)$ We assume that $I_m$ is injective on $\mathcal{C}_{\text{sol}}^\infty (M, \otimes^m_S T^*M)$. According to Lemma \ref{lemm:surj1}, we know that given $f \in \mathcal{C}^\infty_{\text{sol}}(M, \otimes^m_S T^* M)$, there exists $u \in \mathcal{C}^\infty_{\text{comp}}(M_e^\circ, \otimes^m_S T^* M_e^\circ)$ such that $r_M  \Pi^e_m u = r_M {I^e_m}^* I^e_m u = r_M  {I^e_m}^* \tilde{\varphi} = f$, where $ \tilde{\varphi} = I^e_m u \in \cap_{p < \infty} L^p(\partial_- SM_e, d\mu_\nu)$ by Proposition \ref{prop:pi}. 
We want to prove that $\varphi := \left({I^e}^* \tilde{\varphi}\right)|_{\partial_- SM} \in L^p(\partial_- SM, d \mu_\nu)$. Note that by construction $I_m^*\varphi = f$. Since there exists a minimal time $\tau > 0$ for a point $(x,v) \in \partial_-SM$ to reach $\partial_-SM_e$ (in negative time), we obtain:
\[ \begin{split} \|\varphi\|^p_{L^p(\partial_-SM,d\mu_\nu)} & = \int_{\partial_-SM} |{I^e}^* \tilde{\varphi}|^p(x,v)d\mu_\nu(x,v) \\
& = \int_{\partial_-SM}  \frac{1}{l_-^e(x,v)} \int_0^{l_-^e(x,v)}|{I^e}^* \tilde{\varphi}|^p (\varphi_t(x,v)) dt d\mu_\nu(x,v) \\
& \leq 1/\tau \int_{\partial_-SM} \int_0^{l_-^e(x,v)}|{I^e}^* \tilde{\varphi}|^p (\varphi_t(x,v)) dt d\mu_\nu(x,v) \\
& = \tau^{-1} \int_A |{I^e}^* \tilde{\varphi}|^p(x,v) d\mu(x,v)  \leq \tau^{-1} \int_{SM_e} |{I^e}^* \tilde{\varphi}|^p(x,v) d\mu(x,v) < \infty \end{split}, \]
where $A := \cup_{t \geq 0} \varphi_{-t} (\partial_-SM)$ By Proposition \ref{prop:pi}, $w := I^* \varphi \in \cap_{p < \infty} L^p(SM)$ and ${\pi_m}_*w = f$.\\

%$(2) \implies (3)$  Consider $f \in \mathcal{C}^\infty_{\text{sol}}(M, \otimes^m_S T^*M)$. By assumption, there exists a $u \in \cap_{p<\infty}L^p(\partial_- SM, d\mu_\nu)$ such that $I_m^* u =  {\pi_m}_*  I^* u = f$. We set $w := I^* u \in \cap_{p < \infty} L^p(SM)$ by Proposition \ref{prop:pi}. Since $X I^* = 0$, we have $X w = 0$ and ${\pi_m}_* w = f$. \\

%We assume that $I_m$ is injective on $\mathcal{C}_{\text{sol}}^\infty (M, \otimes^m_S T^*M)$. Given $f \in \mathcal{C}^\infty_{\text{sol}}(M, \otimes^m_S T^* M)$, there exists $u \in \mathcal{C}^\infty_{\text{comp}}(M_e^\circ, \otimes^m_S T^* M_e^\circ)$ such that $Pu = r_M \Pi^e_m u = r_M {I^e_m}^* \tilde{\varphi} = f$, with $\tilde{\varphi} =I^e_m u \in \cap_{p < \infty} L^p(\partial_+ SM_e, d\mu_\nu)$ by Proposition \ref{prop:pi}. We set $\varphi = \left({I^e}^* \tilde{\varphi}\right)|_{\partial_+ SM} \in \cap_{p < \infty} L^p(\partial_+ SM, d\mu_\nu)$ and we have by construction $I_m^* \varphi = f$.\\

$(2) \implies (1)$ Let us assume that $I_m f = I \pi_m^* f = 0$, for some $f \in \mathcal{C}^\infty_{\text{sol}}(M, \otimes^m_S T^*M)$. We can apply the Livcic theorem in our context: by \cite[Proposition 5.5]{Guillarmou-17-2}, there exists a function $h \in \mathcal{C}^\infty(SM)$ such that $h|_{\partial SM} = 0$ and $\pi_m^* f = X h$. Now, by hypothesis, ${\pi_m}_*$ is surjective, so there exists an invariant $w \in \cap_{p< \infty} L^p(SM)$ such that $f = {\pi_m}_* w$, with $X w = 0$. We thus claim that
\be \label{eq:approx} 0 = \langle X w , h \rangle = - \langle w , X h \rangle = - \langle w , \pi_m^* f \rangle = - \langle {\pi_m}_* w , f \rangle = - \|f\|^2, \ee
which would conclude the proof of this point. All we have to justify is the second equality since the others are immediate. This can be done using an approximation lemma. We extend $w$ by flow-invariance to $SM_e$ and still denote it $w \in L^2(SM_e)$. We consider a test function $\chi \in \mathcal{C}_{\text{comp}}^\infty(SM_e^\circ)$ such that $\chi \equiv 1$ on $SM$. By \cite[Lemma E. 47]{Dyatlov-Zworski-book-resonances}, there exists a sequence $(w_k)_{k \in \N}$ of smooth functions in $SM^\circ_e$ such that $\chi w_k \rightarrow \chi w$ in $L^2(SM^\circ_e)$ and $\chi X w_k \rightarrow \chi X w = 0$ in $L^2(SM^\circ_e)$ too. In particular, one has both convergences in $L^2(SM)$ without the test function. Now (\ref{eq:approx}) is satisfied for each $w_k$, $k \in \N$, since $h$ vanishes on the boundary $\partial SM$ and passing to the limit as $k \rightarrow \infty$, we get the sought result. \\

$(1) \iff (3)$ If $I_m$ is $s$-injective, then the operator $P$ in Lemma \ref{lemm:surj0} is surjective: if $u \in L^2_{\text{sol}}(M, \otimes^m_S T^*M)$, there exists a $v \in H^{-1}_{\text{comp}}(M^\circ_e, \otimes^m_S T^* M^\circ_e)$ such that $P v = r_M {\pi_m}_* \Pi^e \pi_m^* v = u$. We set $w := \Pi^e \pi_m^* v \in H^{-1}(SM_e)$ (according to Proposition \ref{ref:prop}). Then it is clear that $X w = 0$ and ${\pi_m}_* w = u$ on $M$. To prove the converse, it is sufficient to repeat the previous proof of $(2) \implies (1)$.
\end{proof}

\section{Surjectivity of ${\pi_m}_*$ for a surface}

We now assume that $M$ is two-dimensional and satisfies the assumptions of Theorem \ref{th1}.

\subsection{Geometry of a surface}

\label{ssect:geo}

In local isothermal coordinates $(x,y,\theta)$, we denote by $V$ the vertical vector field $\partial / \partial \theta$. There exists a third vector field $X_\bot$ such that the family $\left\{ X, X_\bot, V \right\}$ forms an orthonormal basis of $T(SM)$ with respect to the Sasaki metric. The functional space $L^2(SM)$ decomposes as the orthogonal sum
\[ L^2(SM) = \bigoplus_{k \in \Z} H_k, \]
where each $H_k$ is the eigenspace of $-iV$ corresponding to the eigenvalue $k$. We also define $\Omega_k = H_k \cap \mathcal{C}^\infty(SM)$. A function $u \in L^2(SM)$ can be decomposed into $u = \sum_{k \in \Z} u_k$, where $u_k \in H_k$. In particular, in the local isothermal coordinates, one has:
\[ u_k(x,y,\theta) = \left( \dfrac{1}{2\pi} \int_0^{2 \pi} u(x,y,t) e^{-ikt} dt \right) e^{ik \theta} \]
This decomposition extends to distributions in $\mathcal{C}^{- \infty}(SM)$. Indeed, if $u \in \mathcal{C}^{- \infty}(SM)$, we set for $\varphi \in \mathcal{C}^\infty(SM)$,
\[ \langle u_k, \varphi \rangle := \langle u , \varphi_{-k} \rangle \]
In particular, if $u_k \in H_k$, then $\pi_k^*{\pi_k}_* u_k = c_k u_k$ for some constant $c_k \neq 0$. There exist two fundamental differential operators $\eta_\pm : H_k \rightarrow H_{k\pm1}$ acting on the spaces $H_k$, defined by $\eta_\pm :=  \frac{1}{2} (X\mp iX_\bot)$ (see \cite{Guillemin-Kazhdan-80}) and the formal adjoint of $\eta_+$ is $- \eta_-$.

Thanks to the explicit expression of the vector fields $X$ and $X_\bot$ in isothermal coordinates $(x,\theta)$, one can compute explicitly $\eta_{\pm}u$ for $u_k \in \Omega_k$. If $u_k(x,y,\theta) = \tilde{u_k}(x,y)e^{ik\theta}$ in local isothermal coordinates, then one has
\be \label{eq:eta} \eta_-(u) = e^{-(k+1)\lambda} \bar{\partial}(\tilde{u_k}e^{k\lambda})e^{i(k-1)\theta}, \ee
\be \eta_+(u) = e^{(k-1)\lambda}\partial (\tilde{u_k}e^{-k\lambda}) e^{i(k+1)\theta},\ee
where $\lambda$ is the factor of conformity with the euclidean metric, $\partial = \frac{1}{2}(\frac{\partial}{\partial x} - i \frac{\partial}{\partial y})$ and $\bar{\partial} = \frac{1}{2}(\frac{\partial}{\partial x} + i \frac{\partial}{\partial y})$.

We denote by $\kappa$ the canonical line bundle, that is the holomorphic line bundle generated by the complex-valued $1$-form $dz$ in local holomorphic coordinates. A smooth $u_k \in \Omega_k$ can be identified with a section of $\kappa^{\otimes k}$ according to the mapping $u_k \mapsto \tilde{u}_k e^{k \lambda} (dz)^{\otimes k}$, written in local holomorphic coordinates, where $u_k(z,\theta) = \tilde{u_k}(z) e^{ik \theta}$ (see \cite[Section 2]{Paternain-Salo-Uhlmann-13} for more details).

\subsection{Proof of Theorem \ref{th4}}

Like in \cite{Guillarmou-17-1}, we introduce the Szegö projector in the fibers using the classical Fourier decomposition :
\[ S : \mathcal{C}^\infty(SM_e) \rightarrow \mathcal{C}^\infty(SM_e), ~~~~ S(u) = \sum_{k \geq 1} u_k \]
This operator extends as a self-adjoint bounded operator on $L^2(SM_e)$ and as a bounded operator on $H^s(SM_e)$ for all $s \in \R$. By duality, it extends continuously to $\mathcal{C}^{-\infty}(SM_e)$ using the $L^2$-pairing, according to the formula $\langle S(u) , v \rangle = \langle u , S(v) \rangle$, for $u \in \mathcal{C}^{-\infty}(SM_e), v \in \mathcal{C}^{\infty}(SM_e)$.

The Hilbert transform is defined as :
\[ \label{eq:hilbert} H : \mathcal{C}^\infty(SM_e) \rightarrow \mathcal{C}^\infty(SM_e), ~~~~ H(u) = -i \sum_{k \in \Z} \text{sgn}(k)u_k, \]
with the convention that $\text{sgn}(0) = 0$. It extends as a bounded skew-adjoint operator on $L^2(SM_e)$ and thus defines by duality a continuous operator on $\mathcal{C}^{-\infty}(SM_e)$, using the $L^2$-pairing $\langle H(u) , v \rangle = -\langle u , H(v) \rangle$, for $u \in \mathcal{C}^{-\infty}(SM_e), v \in \mathcal{C}^{\infty}(SM_e)$. In particular, the Szegö projector can be rewritten using the Hilbert transform, according to the formula :
\be
\label{eq:s}
S(u) = \dfrac{1}{2} \left( (\text{id} + iH)(u) - u_0 \right),
\ee
for $u \in \mathcal{C}^{-\infty}(SM_e)$ (where $u_0 = \frac{1}{2\pi} \pi_0^* \left( {\pi_0}_* u \right)$). 

We have the following commutation relation (see \cite{Guillarmou-17-1} for instance), valid for $u \in \mathcal{C}^{- \infty}(SM)$ in the sense of distributions:

\begin{lemma}
\label{lemm:co}
$XSu = SXu - \eta_+u_0 + \eta_-u_1$
\end{lemma}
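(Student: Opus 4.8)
The plan is to verify the commutation relation $XSu = SXu - \eta_+ u_0 + \eta_- u_1$ by reducing everything to the Fourier decomposition in the fibers, using the expression \eqref{eq:s} for the Szeg\"o projector together with the known action of $X$ on the spaces $H_k$. Since $X = \eta_+ + \eta_-$ acts by shifting Fourier modes up and down by one (because $\eta_\pm : H_k \to H_{k\pm 1}$), I would first record that for $u = \sum_{k\in\Z} u_k$ one has $(Xu)_k = \eta_+ u_{k-1} + \eta_- u_{k+1}$. Both $S$ and $X$ extend continuously to $\mathcal{C}^{-\infty}(SM)$ by the duality formulas given above, so it suffices to prove the identity for smooth $u$ and then pass to distributions by density and continuity; I expect the genuinely distributional content to be purely formal once the smooth case is settled.

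For the smooth case, I would compute $SXu$ and $XSu$ mode by mode and compare. On the one hand, $SXu = \sum_{k\geq 1}(Xu)_k = \sum_{k\geq 1}\bigl(\eta_+ u_{k-1} + \eta_- u_{k+1}\bigr)$. On the other hand, $Su = \sum_{j\geq 1} u_j$, so $XSu = \sum_{j\geq 1} X u_j = \sum_{j\geq 1}\bigl(\eta_+ u_j + \eta_- u_j\bigr)$, where $\eta_+ u_j \in H_{j+1}$ and $\eta_- u_j \in H_{j-1}$. The strategy is then to reindex both sums so that they are grouped by the target mode and to see which boundary terms fail to match. In $SXu$ the sum over target modes $k\geq 1$ collects $\eta_+ u_{k-1}$ for $k-1\geq 0$ and $\eta_- u_{k+1}$ for $k+1\geq 2$; in $XSu$ the raising contributions $\eta_+ u_j$ run over $j\geq 1$ (landing in modes $\geq 2$) and the lowering contributions $\eta_- u_j$ run over $j\geq 1$ (landing in modes $\geq 0$). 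Carefully lining up these ranges, the raising parts agree except that $SXu$ contains the extra term $\eta_+ u_0$ (the $k=1$ contribution from $\eta_+ u_{k-1}$), and the lowering parts agree except that $XSu$ contains the extra term $\eta_- u_1$ (the $j=1$ contribution $\eta_- u_1 \in H_0$, which $S$ would discard). Solving for $XSu$ gives $XSu = SXu - \eta_+ u_0 + \eta_- u_1$, exactly as claimed.

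The main obstacle — really the only place one can go wrong — is the bookkeeping at the two boundary modes $k=0$ and $k=1$, since the Szeg\"o projector keeps precisely $k\geq 1$ and the $\pm 1$ mode shifts interact delicately with that cutoff; the sign and placement of the $\eta_+ u_0$ and $\eta_- u_1$ terms all come from exactly which contributions cross the threshold between $k=0$ and $k=1$. I would therefore write out the modes $0,1,2$ explicitly to fix the signs, rather than rely on the telescoping alone. One subtlety worth flagging is the factor $\tfrac12$ and the subtracted $u_0$ in formula \eqref{eq:s}: working instead through $S = \tfrac12((\mathrm{id}+iH) - \text{(zero mode)})$ and using that $H$ acts as the scalar $-i\,\mathrm{sgn}(k)$ on $H_k$ gives an alternative derivation, but one must then check that $X$ and the zero-mode projection commute up to exactly the $\eta_\pm$ boundary terms, so the direct mode-by-mode computation above is cleaner and I would present that as the proof.
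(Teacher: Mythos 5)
Your proof is correct. Note first that the paper itself does not prove Lemma \ref{lemm:co}: it states the relation with a pointer to \cite{Guillarmou-17-1}, so your argument supplies the verification the paper delegates to the literature. Your route is the direct one: since $\eta_\pm = \frac{1}{2}(X \mp iX_\bot)$ gives $X = \eta_+ + \eta_-$ with $\eta_\pm : H_k \to H_{k\pm 1}$, one has $(Xu)_k = \eta_+ u_{k-1} + \eta_- u_{k+1}$, whence $SXu = \sum_{k\geq 0}\eta_+ u_k + \sum_{k \geq 2}\eta_- u_k$ while $XSu = \sum_{k\geq 1}\bigl(\eta_+ u_k + \eta_- u_k\bigr)$; the two differ exactly by the boundary terms you identify ($SXu$ carries the extra $\eta_+ u_0$, $XSu$ the extra $\eta_- u_1$, the latter landing in $H_0$ where $S$ truncates), giving $XSu = SXu - \eta_+ u_0 + \eta_- u_1$ with the right signs. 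In the cited reference the relation is instead typically derived from the Pestov--Uhlmann commutator identity for the Hilbert transform, $[H,X]u = X_\bot u_0 + (X_\bot u)_0$ (cf.\ \cite{Pestov-Uhlmann-05}), combined with \eqref{eq:s} -- precisely the alternative you flag and sensibly set aside, since that route forces one to track the zero-mode projection and the convention-dependent signs in $H$, whereas your telescoping is self-contained and leaves no room for sign ambiguity. Your treatment of the distributional case is also adequate: the identity for $u \in \mathcal{C}^{-\infty}(SM)$ follows either by pairing both sides against a smooth test function, whose Fourier modes decay rapidly so all mode sums converge, using $\langle u_k,\varphi\rangle = \langle u,\varphi_{-k}\rangle$ and that the adjoint of $\eta_+$ is $-\eta_-$, or, as you say, by weak-$*$ density of smooth functions together with the continuity of $X$ and $S$ on $\mathcal{C}^{-\infty}(SM)$.
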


We can now prove a similar result to \cite[Proposition 5.10]{Guillarmou-17-2} :

\begin{lemma}
\label{coro:surj1}
Under the assumptions of Theorem \ref{th4}, given $f_1 \in \mathcal{C}^{\infty}(M, T^*M)$ satisfying $D^*f_1=0$, there exists $w \in \cap_{p < \infty} L^p(SM_e)$ such that $X w = 0$ in $SM_e^\circ$ and ${\pi_1}_* w = f_1$ in $M$. Moreover, we can take $w$ odd i.e. without even frequencies in its Fourier decomposition.
\end{lemma}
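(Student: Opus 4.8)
The plan is to construct the invariant distribution $w$ for a solenoidal $1$-form $f_1$ by first producing an invariant distribution whose frequency content is suitably controlled, and then symmetrizing it to obtain the desired pushforward. Since Theorem \ref{th4} asks for the surjectivity of ${\pi_m}_*$, and here we treat the special case $m=1$, I would begin by invoking the framework already established: the equivalence principle (Theorem \ref{th2}) tells us that constructing such an invariant $w$ is tantamount to $s$-injectivity of $I_1$, which is covered by Guillarmou \cite{Guillarmou-17-2}. However, the point of this lemma is constructive — we want an explicit $w \in \cap_{p<\infty} L^p(SM_e)$ — so I would instead work directly with the resolvent/Szegő machinery just introduced.

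First I would identify $f_1 \in \mathcal{C}^\infty(M,T^*M)$ with an element of $\Omega_{-1}\oplus\Omega_1$, writing $\pi_1^* f_1 = f_{-1} + f_1$ (abusing notation for the $\pm 1$ frequency components), since a $1$-form lifts to a function on $SM$ supported in frequencies $\pm 1$. The condition $D^*f_1 = 0$ should translate, via the commutation formulas (\ref{eq:eta}) for $\eta_\pm$, into a holomorphicity-type statement on the corresponding section of $\kappa^{\otimes \pm 1}$. The idea is then to use the holomorphic structure: I would look for an invariant distribution $w = \sum_{k} w_k$ with $Xw = 0$, and exploit the commutation relation of Lemma \ref{lemm:co} together with the Szegő projector $S$ to control which frequencies appear. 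The oddness requirement — that $w$ have only odd frequencies — suggests decomposing into even and odd parts and showing the even part can be discarded while preserving $Xw=0$ and the constraint ${\pi_1}_* w = f_1$, which only reads off the $\pm 1$ frequencies anyway.

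The key technical step is to produce the invariant distribution with the right regularity. Here I would proceed as in \cite[Proposition 5.10]{Guillarmou-17-2}: take the solenoidal extension $E f_1$ into $M_e$ (using the extension operator $E$ discussed before Lemma \ref{lemm:surj1}, which preserves smoothness and the solenoidal condition in the absence of non-trivial Killing fields), apply the normal operator machinery via $\Pi^e$, and invoke Proposition \ref{prop:pi} to guarantee that the resulting $w$ lands in $\cap_{p<\infty}L^p(SM_e)$. The invariance $Xw=0$ follows from $X\Pi^e = 0$, and the pushforward identity ${\pi_1}_* w = f_1$ on $M$ follows from the surjectivity of $P = r_M \Pi^e_1$ (Lemma \ref{lemm:surj1}), combined with the $L^p$ mapping bounds. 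The surface hypothesis enters precisely here, giving the factorization and the Fourier-mode analysis via $\eta_\pm$ and $S$.

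The main obstacle, I expect, is the oddness of $w$: ensuring that the constructed invariant distribution has no even frequencies while remaining invariant and achieving the prescribed pushforward of the odd $1$-form $f_1$. The natural route is to show that $X$ preserves the parity of the Fourier decomposition — since $X = \eta_+ + \eta_-$ shifts frequency by $\pm 1$, it maps even frequencies to odd and vice versa, so $Xw=0$ decouples the even and odd subsystems. Because $f_1$ lives purely in odd frequencies ($\pm 1$), I would restrict attention to the odd part $w^{\mathrm{odd}} = \sum_{k \text{ odd}} w_k$, verify that it is itself invariant (the even and odd components of $Xw=0$ being independent constraints), and check that ${\pi_1}_* w^{\mathrm{odd}} = {\pi_1}_* w = f_1$ since the pushforward onto $1$-forms only sees frequency $\pm 1$. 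Thus replacing $w$ by $w^{\mathrm{odd}}$ preserves all required properties and yields the oddness for free; the subtlety is merely to confirm this parity splitting survives at the level of distributions in $\cap_{p<\infty}L^p$, which follows from the continuity of the Fourier-mode projections on these spaces.
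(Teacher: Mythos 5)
Your proposal is correct and follows essentially the same route as the paper: the paper obtains the first part by directly invoking Theorem \ref{th2} together with the s-injectivity of $I_1^e$ from \cite{Guillarmou-17-2}, while you merely inline the proof of that implication (Lemma \ref{lemm:surj1} to produce a smooth compactly supported preimage $u$ under $P$, then $w = \Pi^e \pi_1^* u$ with $X\Pi^e = 0$ and Proposition \ref{prop:pi} for the $L^p$ regularity), and your parity-decoupling argument for the oddness is identical to the paper's. The only cosmetic difference is the final regularity point: instead of appealing loosely to ``continuity of the Fourier-mode projections'', the paper observes that $w_{\text{odd}} = \frac{1}{2}(\text{id} - A^*)w$, where $A$ is the fiberwise antipodal map preserving the Liouville measure, so the odd projection is trivially bounded on $L^p$.
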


\begin{proof}
The first part of the statement is an immediate consequence of Theorem \ref{th2} and the s-injectivity of $I^e_1$ \cite[Theorem 5]{Guillarmou-17-2}. The second part comes from the fact that if $w \in \mathcal{C}^{- \infty}(SM)$ satisfies $X w = 0$, then $X w_{\text{odd}} = X w_{\text{even}} = 0$. Moreover, ${\pi_1}_* w$ only depends on $w_1$ and $w_{-1}$ (for $f \in \mathcal{C}^\infty(M,T^*M)$, $\langle {\pi_1}_* w , f \rangle = \langle w, \pi_1^* f \rangle = \langle w_{-1} + w_1 , \pi_1^* f \rangle$ since $\pi_1^* f \in \Omega_{-1} \oplus \Omega_1$), which implies that ${\pi_1}_* w = {\pi_1}_* w_{\text{odd}}$. As a consequence, we can take $w_{\text{odd}}$ and the result still holds. The regularity $w_{\text{odd}} \in \cap_{p < \infty} L^p(SM_e)$ is a consequence of the fact that $w_{\text{odd}} = \frac{1}{2}(\text{id} - A^*)w \in L^p(SM)$ if $w \in L^p(SM)$, where $A$ is the antipodal map in the fibers (it preserves the Liouville measure).
\end{proof}

\begin{lemma}
$H$ extends as a bounded operator $H : L^{p}(SM) \rightarrow L^p(SM)$, for any $p \in (1,+\infty)$.
\end{lemma}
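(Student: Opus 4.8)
The plan is to observe that $H$ acts only along the fibres of the circle bundle $\pi_0 : SM \to M$ and to reduce the estimate to the classical M. Riesz theorem on $S^1$ by a Fubini argument. First I would recall that the decomposition $L^2(SM) = \bigoplus_{k \in \Z} H_k$, together with the operator $-iV$ whose eigenspaces the $H_k$ are, is defined through the natural rotation action in the fibres $S_xM$. Writing things in local isothermal coordinates $(x,y,\theta)$, for a fixed base point the restriction $u(x,y,\cdot)$ is a function on the fibre circle, its Fourier coefficients are the $\tilde u_k(x,y)e^{ik\theta}$, and by the very definition $H(u)(x,y,\theta) = (\mathcal{H}_\theta u)(x,y,\theta)$, where $\mathcal{H}_\theta$ is nothing but the classical conjugate-function (Hilbert) transform acting in the $\theta$-variable alone.

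Second, I would use that the Liouville measure disintegrates over $M$ with uniform fibre measures: in isothermal coordinates $d\mu = e^{2\lambda}\,dx\,dy\,d\theta = dA(x)\,d\theta$, where $dA$ is the Riemannian area form and $d\theta$ is the arc length on the fibre (the latter because $V = \partial/\partial\theta$ has unit Sasaki norm, as recalled in \S\ref{ssect:geo}). Equivalently, fibre rotation preserves $d\mu$, so the conditional measures on the circles $S_xM$ are rotation-invariant, hence proportional to $d\theta$ with a constant independent of $x$. This is the structural input that allows a fibrewise estimate to be integrated against a base measure that does not depend on $\theta$.

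Third, I would invoke the M. Riesz theorem: for every $p \in (1,+\infty)$ there is a constant $C_p > 0$, depending on $p$ only, with $\|\mathcal{H}_\theta v\|_{L^p(S^1,d\theta)} \le C_p \|v\|_{L^p(S^1,d\theta)}$ for all $v \in L^p(S^1)$. Applying this fibrewise and integrating the $p$-th power over the base gives, for $u \in \mathcal{C}^\infty(SM)$,
\[
\|Hu\|_{L^p(SM)}^p = \int_M \!\int_0^{2\pi} |Hu|^p\, d\theta\, dA(x) \le C_p^p \int_M \!\int_0^{2\pi} |u|^p\, d\theta\, dA(x) = C_p^p \|u\|_{L^p(SM)}^p ,
\]
and the claim follows by density of $\mathcal{C}^\infty(SM)$ in $L^p(SM)$.

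The only delicate point — the main (mild) obstacle — is the reduction carried out in the first two steps: one must verify that $H$ genuinely acts in the fibre variable alone and that the globally defined operator coincides, in each isothermal chart, with the classical circle Hilbert transform, so that the uniform constant $C_p$ and Fubini apply across charts. Since the eigenspaces $H_k$ are intrinsic (eigenspaces of $-iV$) and $d\mu$ disintegrates with uniform fibre measure, this is purely a matter of unwinding definitions. In particular no dynamical hypothesis (hyperbolicity of $K$, convexity, absence of conjugate points) enters here, and the same argument holds verbatim on $SM_e$.
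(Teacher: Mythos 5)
Your proof is correct and follows essentially the same route as the paper: both reduce to the fibre circles by disintegrating the Liouville measure (the paper writes $\|u\|^p_{L^p(SM)} = \int_M \|u\|^p_{L^p(S_xM)}\, d\mathrm{vol}(x)$ and notes that $H$ acts separately on each fibre) and then invoke the classical boundedness of the circle Hilbert transform. The only difference is cosmetic: you cite the M.~Riesz theorem as a black box with a uniform constant $C_p$, while the paper sketches its standard proof (the weak $(1,1)$ bound, Marcinkiewicz interpolation for $p \in (1,2]$, and skew-adjointness plus duality for $p \geq 2$) — both ultimately defer to the same classical harmonic analysis on $\mathbb{S}^1$.
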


\begin{proof}
First, let us note that given $p \geq 1$ and $u \in L^p(SM)$, we have that $x \mapsto \|u\|_{L^p(S_xM)}^p = \int_{S_xM} |u|^p dS_x$ is almost-everywhere defined and finite, and by integration over the fibers:
\[ \begin{split} \|u\|^p_{L^p(SM)} = \int_{SM} |u|^p d\mu = \int_M \int_{S_xM} |u|^p dS_x d\text{vol}(x)  = \int_M \|u\|_{L^p(S_xM)}^p d\text{vol}(x)  \end{split} \]
Since $H$ acts separately on each fiber, we are reduced to proving the lemma on the circle $\mathbb{S}^1$ endowed with a smooth measure $d\theta$. Now, it is clear that $H : L^2(\mathbb{S}^1) \rightarrow L^2(\mathbb{S}^1)$ is bounded. The hard point, here, is to prove that $H : L^1(\mathbb{S}^1) \rightarrow L^{1,w}(\mathbb{S}^1)$ (the weak $L^1$-space) is bounded too. This is a classical fact in harmonic analysis for which we refer to \cite{Tao-lecture-notes}. Assuming this claim, we obtain by Marcinkiewicz interpolation theorem the boundedness of $H : L^p(\mathbb{S}^1) \rightarrow L^p(\mathbb{S}^1)$ for any $p \in (1,2]$ and since $H$ is formally skew-adjoint, this also provides its boundedness on $L^p(\mathbb{S}^1)$ for $p \geq 2$ by duality.
\end{proof}

We prove that for a $w$ like in Lemma \ref{coro:surj1}, $S(w)$ makes sense as a function on $SM_e$. More precisely:

\begin{lemma}
\label{prop:sbounded}
$S$ extends as a bounded operator $S : L^{p}(SM) \rightarrow L^p(SM)$, for any $p \in (1,+\infty)$.
\end{lemma}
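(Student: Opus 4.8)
The plan is to read the result directly off the identity (\ref{eq:s}), which expresses $S$ as a combination of the identity, the Hilbert transform $H$, and the zeroth-mode projection $u \mapsto u_0$. Since the identity is trivially bounded and we have just established that $H : L^p(SM) \to L^p(SM)$ is bounded for every $p \in (1,+\infty)$, the only remaining task is to bound the map $u \mapsto u_0$; boundedness of $S$ then follows because a finite linear combination of bounded operators is bounded.

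First I would note that $u_0 = \frac{1}{2\pi}\pi_0^*({\pi_0}_* u)$ is constant along each fiber $S_xM$, equal there to the fiber average $\frac{1}{2\pi}\int_0^{2\pi} u(x,\theta)\, d\theta$ against the normalized angular measure (in two dimensions the fiber is a circle of Sasaki-length $2\pi$, so its induced measure is exactly $d\theta$). Disintegrating the Liouville measure over $M$ as in the proof of the preceding lemma,
\[ \|u_0\|^p_{L^p(SM)} = \int_M \int_{S_xM} |u_0(x)|^p\, dS_x\, d\text{vol}(x) = \int_M |u_0(x)|^p \cdot 2\pi\, d\text{vol}(x), \]
and Jensen's inequality (convexity of $t \mapsto |t|^p$ for $p \geq 1$) applied on each fiber yields the pointwise bound $|u_0(x)|^p \leq \frac{1}{2\pi}\int_0^{2\pi}|u(x,\theta)|^p\, d\theta$. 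Substituting, the factors of $2\pi$ cancel and I obtain $\|u_0\|_{L^p(SM)} \leq \|u\|_{L^p(SM)}$, so the zeroth-mode projection is in fact a contraction on $L^p(SM)$ for every $p \geq 1$.

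I do not anticipate any serious obstacle here: all of the genuine analytic difficulty — the weak $(1,1)$ bound for the fiberwise Hilbert transform together with Marcinkiewicz interpolation — was already absorbed in the previous lemma, and the zeroth-mode projection is controlled by elementary convexity. The one point worth a moment's care is the normalization of the fiber measure: it is the identification $dS_x = d\theta$ (valid because $V$ is a Sasaki-unit vector field spanning the fiber) that makes the averaging operator an honest conditional expectation against a probability measure, so that Jensen applies with constant $1$. Had the normalization differed, the same argument would still yield boundedness, merely with a larger constant, which is all that is needed.
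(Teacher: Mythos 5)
Your proof is correct and follows essentially the same route as the paper: both use the identity (\ref{eq:s}) to write $S = \frac{1}{2}\bigl(\mathrm{id} + iH - \frac{1}{2\pi}\pi_0^*{\pi_0}_*\bigr)$ and invoke the $L^p$-boundedness of $H$ from the preceding lemma. The only difference is cosmetic: where the paper simply cites the boundedness of $\pi_0^*{\pi_0}_*$ on $L^p(SM)$, you supply the short fiberwise Jensen argument proving it is in fact an $L^p$-contraction, which is a fine (and correct) way to make that step self-contained.
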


\begin{proof}
Using (\ref{eq:s}), we can write for $w \in \mathcal{C}^\infty(SM)$, $S(w) = \frac{1}{2}(w +iH(w) - w_0) = \frac{1}{2}(\text{id} + iH - \frac{1}{2\pi} \pi_0^*  {\pi_0}_*)w$. Now, $H$ is a bounded operator $L^{p}(SM) \rightarrow L^p(SM)$, for any $p \in (1,+\infty)$, and as mentioned in \S\ref{ssect:xray}, $\pi_0^*  {\pi_0}_* : L^p(SM) \rightarrow L^p(SM)$ is bounded for any $p \in (1, + \infty)$.
\end{proof}

Lemma \ref{prop:sbounded} shows that if $w(1), ..., w(m) \in \cap_{p < \infty} L^{p}(SM)$, then
\[S(w(1))...S(w(m))\in \cap_{p < \infty} L^{p}(SM)\]
is well-defined. We can now prove Theorem \ref{th4}:
\[ {\pi_m}_* :  \cap_{p < \infty} L^p_{\text{inv}}(SM) \rightarrow \mathcal{C}^\infty_{\text{sol}}(M, \otimes^m_S T^*M)\]
is surjective for a surface. According to \cite[Lemma 7.2]{Paternain-Zhou-16}, the proof actually boils down to the

\begin{lemma}
Assume $a_m \in \Omega_m$ satisfies $\eta_- a_m = 0$. Then there exists $\omega \in \cap_{p < \infty} L^p(SM)$ such that $X \omega = 0$ and ${\pi_m}_* \omega = {\pi_m}_* a_m$.
\end{lemma}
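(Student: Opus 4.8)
The plan is to realize $\omega$ as a finite sum of products of \emph{holomorphic invariant} functions, each manufactured from the first-order case (Lemma \ref{coro:surj1}) through the Szegö projector. First I would reduce the matching of pushforwards to a bookkeeping of Fourier modes. Since $\pi_m^*$ of a symmetric $m$-tensor is supported in $\bigoplus_{|k|\le m}\Omega_k$, the pushforward ${\pi_m}_*\omega$ only sees the modes $\omega_k$ with $|k|\le m$. Hence it suffices to produce an invariant $\omega\in\cap_{p<\infty}L^p(SM)$ whose Fourier modes $\omega_k$ all vanish for $k<m$ and whose mode $\omega_m$ equals $a_m$: then the only mode of $\omega$ in the range $|k|\le m$ is $k=m$, so ${\pi_m}_*\omega={\pi_m}_*\omega_m={\pi_m}_*a_m$.

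The building block is the degree-one case. Given a holomorphic $b\in\Omega_1$ (that is $\eta_-b=0$), I would apply Lemma \ref{coro:surj1} to the real solenoidal $1$-form $f_1$ determined by $\pi_1^*f_1=b+\bar b$; this tensor is solenoidal precisely because $\eta_-b=0$ and $\eta_+\bar b=0$. The lemma returns an odd invariant $w\in\cap_{p<\infty}L^p(SM_e)$ with ${\pi_1}_*w=f_1$, and comparing the $\Omega_1$-components of $\pi_1^*{\pi_1}_*w$ with those of $\pi_1^*f_1$ forces $w_1$ to be a nonzero multiple of $b$, hence holomorphic. Holomorphicity of $w_1$ is exactly what is needed: by Lemma \ref{lemm:co} together with $w_0=0$ one has $X(Sw)=\eta_-w_1$, which now vanishes. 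Thus $h:=Sw$ is an invariant function, lies in $\cap_{p<\infty}L^p$ by Lemma \ref{prop:sbounded}, has all its Fourier modes in $\{k\ge 1\}$, and satisfies $h_1=w_1\propto b$.

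Next I would assemble the $m$-tensor. Writing the given $a_m$ as a finite sum of products of holomorphic $1$-forms, $a_m=\sum_l\prod_{i=1}^m b^{(l,i)}$ with $\eta_-b^{(l,i)}=0$, I construct for each factor an invariant $h^{(l,i)}$ as above and set $\omega=\sum_l c_l\prod_{i=1}^m h^{(l,i)}$ for suitable constants $c_l$. A product of flow-invariant functions is flow-invariant, so $X\omega=0$; by the generalized Hölder inequality each product lies in $\cap_{p<\infty}L^p(SM)$; and since every factor has modes $\ge 1$, the product has modes $\ge m$, its mode $m$ being $\prod_i h^{(l,i)}_1\propto\prod_i b^{(l,i)}$ (the only decomposition $m=1+\dots+1$). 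Choosing the $c_l$ to absorb the proportionality constants yields $\omega_m=a_m$ and $\omega_k=0$ for $k<m$, so by the first step ${\pi_m}_*\omega={\pi_m}_*a_m$.

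The hard part is the complex-analytic input in the last paragraph: one must express the holomorphic section $a_m$ (equivalently, a holomorphic section of $\kappa^{\otimes m}$) as a finite sum of products of $m$ \emph{globally defined} holomorphic $1$-forms. In a holomorphic chart this is immediate, since $h\,(dz)^{\otimes m}=(h\,dz)\otimes (dz)^{\otimes(m-1)}$, but globalizing requires enough holomorphic $1$-forms on the surface, which one obtains from the solvability of the $\bar\partial$-equation on the open surface together with the freedom in the extension $M_e$. Once this factorization is granted, the remaining points—invariance from the derivation property of $X$ on flow-invariant functions, the $\cap_{p<\infty}L^p$-regularity via Lemma \ref{prop:sbounded} and Hölder, and the mode bookkeeping—are routine.
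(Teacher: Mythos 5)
Your proposal follows the paper's strategy essentially step for step: factor $a_m$ into holomorphic elements of $\Omega_1$, lift each factor to an odd invariant $w \in \cap_{p<\infty}L^p(SM_e)$ via Lemma \ref{coro:surj1}, use the commutation relation of Lemma \ref{lemm:co} (with $w_0=0$ by oddness and $\eta_- w_1 = 0$) to see that the Szegö projection $Sw$ is again flow-invariant, and conclude by taking products and matching the lowest Fourier mode, with $L^p$-regularity from Lemma \ref{prop:sbounded} and Hölder. Your one variation is sound and even slightly cleaner on one point: you apply Lemma \ref{coro:surj1} to the \emph{real} form $f_1$ with $\pi_1^* f_1 = b + \bar b$, which produces $w_{-1} \propto \bar b \neq 0$ and lets $S$ kill it, whereas the paper applies the lemma directly to the complex form ${\pi_1}_* f(i)$ (implicitly complexifying the lemma) and deduces $w_{-1}=0$ outright. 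The mode bookkeeping (all modes of each factor $\geq 1$, hence all modes of the product $\geq m$, with mode $m$ equal to the product of the mode-$1$ parts, and ${\pi_m}_*$ only seeing modes $|k|\leq m$) is correct.

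The genuine gap is exactly the step you flag and then ``grant'': the global factorization of $a_m$ into globally defined holomorphic $1$-form factors. Gesturing at $\bar\partial$-solvability does not produce it, and the ``freedom in the extension $M_e$'' plays no role here. The paper closes this with a single classical fact: on a compact surface with boundary (more generally on any open Riemann surface) every holomorphic line bundle is holomorphically trivial (Forster, Theorem 30.3, ultimately a consequence of Behnke--Stein: $H^1(\mathcal{O})=0$ together with $H^2(\cdot,\Z)=0$, so the exponential sequence trivializes all line bundles). Hence the canonical bundle $\kappa$ admits a nowhere-vanishing holomorphic section $\alpha$; the section of $\kappa^{\otimes m}$ attached to $a_m$ is $v\,\alpha^{\otimes m}$ with $v$ holomorphic precisely because $\eta_- a_m = 0$, and one obtains the clean \emph{single-product} factorization $a_m = f(1)\cdots f(m)$ with $f(1)=v\alpha$ and $f(i)=\alpha$ for $i \geq 2$ --- no finite sums and no extra constants $c_l$ are needed (though your finite-sum bookkeeping would also work). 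With that reference inserted, your argument is complete; without it, the central complex-geometric input of the proof is missing.
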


\begin{proof}
This relies on the fact that the canonical line bundle $\kappa$ for a smooth compact surface with boundary is holomorphically trivial, that is, there exists a nowhere vanishing holomorphic section $\alpha$ (see \cite[Theorem 30.3]{Forster-81} for a reference). As a consequence, $\kappa^{\otimes m}$ is trivial too, with non-vanishing section $\alpha^{\otimes m}$ and the element of $\kappa^{\otimes m}$ canonically associated to $a_m$ (according to the mapping introduced in the previous Section) is of the form $v \alpha^{\otimes m}$ for some smooth complex-valued $v$. But according to the expression (\ref{eq:eta}), if $a_m \in \Omega_m$ satisfies $\eta_- a_m = 0$ then $\bar{\partial}(\tilde{a_m}e^{m\lambda}) = 0$ which yields that $v$ is holomorphic. Thus, we can write locally $\tilde{a_m} e^{m \lambda} (dz)^{\otimes m} = (v \alpha) \otimes \alpha^{\otimes (m-1)}$ and all the factors of the product are holomorphic.

In other words, $a_m = f(1) ... f(m)$, where each $f(i) \in \Omega_1$ satisfies $\eta_- f(i) = 0$. Now, according to Lemma \ref{coro:surj1}, we can find, for each $1 \leq i \leq m$, a $w(i) \in  \cap_{p < \infty} L^p(SM_e)$ such that $X w(i) = 0$ in $SM_e^\circ$, $w(i)$ is odd and ${\pi_1}_* w(i) = {\pi_1}_* f(i)$ in $M$. Indeed, ${\pi_1}_* f(i)$ is in $\mathcal{C}^\infty(M, T^*M)$ and one has
\[ \pi_0^* \left( D^*({\pi_1}_* f(i)) \right) = \eta_+ {(f(i))}_{-1} + \eta_- {(f(i))}_1 = 0, \]
since $f(i) = (f(i))_1 \in \Omega_1$ satisfies $\eta_- f(i) = 0$. So $D^*({\pi_1}_* f(i)) = 0$ and the hypothesis of the Lemma \ref{coro:surj1} are satisfied.

Note in particular that since $w(i) \in L^2(SM)$, the equality ${\pi_1}_* w(i) = {\pi_1}_* f(i)$ also provides
\[ \pi_1^* {\pi_1}_* w(i) = c_1 (w(i)_1 + w(i)_{-1}) = \pi_1^* {\pi_1}_* f(i) = c_1 f(i)_1, \]
that is $w(i)_1 = f(i)_1 \in \Omega_1$ and $w(i)_{-1} = 0$. Thus, each $w(i)$ satisfies $\eta_- (w(i))_1 = \eta_- f(i) = 0$ and $\eta_+ (w(i))_0 = 0$ insofar as it is odd. As a consequence, applying the commutation relation stated in Lemma \ref{lemm:co}, we obtain
\[ X S(w(i)) = S \left( X w(i) \right) = 0 \]
and ${\pi_1}_*(S(w(i))) = {\pi_1}_*(w(i)) =  {\pi_1}_* f(i)$. 

Thus, we can define $\omega = S(w(1))...S(w(m)) \in \cap_{p < + \infty}L^p(SM)$ and it satisfies $X \omega = 0$ on $SM$. By construction, we have $\omega_m = f(1) ... f(m) = a_m \in \Omega_m$ and $\omega_l = 0$ for $l < m$ on $M$. We conclude that ${\pi_m}_* \omega = {\pi_m}_* a_m$ on $M$.

\end{proof}

\begin{remark}
The proof relies on the fact that we are here able to find sufficiently regular invariant distributions $w \in \cap_{p < \infty} L^p(SM_e)$ such that, given $f_1 \in \mathcal{C}^{\infty}_{\text{sol}}(M, T^*M)$, we have ${\pi_1}_* w = f_1$, and that $\cap_{p < \infty} L^p(SM_e)$ is an algebra. Had we not been able to obtain such a regularity, one could have skirted this issue by analyzing the kernel of the Szegö projector (see \cite[Lemma 3.10]{Guillarmou-17-1}) and proving that the multiplication $S(w)S(v)$ at least makes sense as a distribution, using \cite[Theorem 8.2.10]{Hormander-90}.
\end{remark}

\bibliographystyle{alpha}
\nocite{*}
\bibliography{biblio}

\end{document}